\begin{document}

\newcommand{\C}{\mathbb {C}}
\newcommand{\N}{\mathbb{N}}
\newcommand{\R}{\mathbb{R}}
\newcommand{\tf}{\mathcal{F}}

\renewcommand{\theenumi}{\roman{enumi}}
\renewcommand{\labelenumi}{\theenumi)}

\swapnumbers
\newtheorem{EXPLE}{Examples}[section]
\newtheorem{VDC}[EXPLE]{Theorem}
\newtheorem{REM1}[EXPLE]{Examples}
\newtheorem{Details}[EXPLE]{Proofs of technical details}
\newtheorem{REM2}[EXPLE]{Remark}
\newtheorem{VDC2}[EXPLE]{Theorem}
\newtheorem{REM3}[EXPLE]{Remark}
\newtheorem{VDC3}[EXPLE]{Theorem}
\newtheorem{VDC3bis}[EXPLE]{Remark}
\newtheorem{COR1}[EXPLE]{Corollary}
\newtheorem{REM2bis}[EXPLE]{Remark}
\newtheorem{VDC4}[EXPLE]{Theorem}
\newtheorem{VDC4bis}[EXPLE]{Remarks}
\newtheorem{DEF1}{Definition}[section]
\newtheorem{REM8}[DEF1]{Remark}
\newtheorem{SCHRO1}[DEF1]{Theorem}
\newtheorem{SCHRO2}[DEF1]{Corollary}
\newtheorem{OPTI}[DEF1]{Theorem}
\newtheorem{REM9}[DEF1]{Remark}
\newtheorem{PREGEO1}[DEF1]{Theorem}
\newtheorem{PREGEO2}[DEF1]{Theorem}
\newtheorem{REM4}[DEF1]{Remarks and Example}
\newtheorem{SCHRO3}[DEF1]{Theorem}
\newtheorem{REM5}[DEF1]{Remark}
\newtheorem{REM7}[DEF1]{Remark}
\newtheorem{SCHRO4}[DEF1]{Theorem}
\newtheorem{REM6}[DEF1]{Remarks}
\newtheorem{LEM}{Lemma}[section]
\newtheorem{LEM2}[LEM]{Lemma}

\title{Asymptotic estimates of oscillatory integrals with general phase and singular amplitude:\\ Applications to dispersive equations}

\author{Florent Dewez\footnote{Université Lille 1, Laboratoire Paul Painlevé, CNRS U.M.R 8524, 59655 Villeneuve d'Ascq Cedex, France. Email: florent.dewez@math.univ-lille1.fr}}

\date{}

\maketitle

\begin{abstract}

	In this paper, we furnish van der Corput types estimates for oscillatory integrals with respect to a large parameter, where the phase is allowed to have a stationary point of real order and the amplitude to have an integrable singularity. The resulting estimates show explicitly the influence of these two particular points on the decay. These results are then applied to the solutions of a family of dispersive equations whose generators are Fourier multipliers. We explore the effect of a limitation to compact frequency bands and of singular frequencies of the initial condition on the decay. Uniform estimates in space-time cones as well as $L^{\infty}$-norm estimates are furnished and the optimality of the decay rates is proved under certain hypotheses. Moreover the influence of a growth limitation at infinity of the symbols on the dispersion is exhibited.

\end{abstract}

\vspace{0.3cm}

\noindent \textbf{Mathematics Subject Classification (2010).} Primary 35B40; Secondary 35S10, 35B30, 35Q41, 35Q40.

\noindent \textbf{Keywords.} van der Corput, singular frequency, Fourier multiplier, dispersive equations, space-time cone, (optimal) $L^{\infty}$-time decay.

\setcounter{section}{-1}
\section{Introduction}

\hspace{2.5ex} The asymptotic behaviour of solutions of dispersive equations can sometimes be derived from estimates of oscillatory integrals with respect to a large parameter. The van der Corput Lemma \cite[p. 332]{stein} permits to control oscillatory integrals in terms of the phase function and the amplitude if they are regular, exhibiting the decay when the large parameter tends to infinity. In view of applications to certain dispersive equations, many adaptations of this result were developed in the literature (see for example \cite{liess}, \cite{benartzi1994}, \cite{ionescu}). In the present paper, an extension is proposed: the phase function is allowed to have a stationary point of real order and a frequency of the compact integration interval can be an integrable singularity of the amplitude. We provide estimates showing in an explicit way the influence of the order of the stationary point and of the strength of the singularity on the decay. The results are then applied to the solutions of a family of evolution equations, whose generators are Fourier multipliers. For specific initial conditions, we furnish $L^{\infty}$-norm estimates as well as uniform estimates in certain space-time cones. These results show in which way the initial data and the symbol may affect the dispersion of the solution. In particular, this paper combined with \cite{article1} and \cite{article2} solves the open problem of the optimal $L^{\infty}$-decay rate for the free Schrödinger equation on the line with initial conditions whose Fourier transforms have certain absolutely integrable singularities.

Consider the following evolution equation
\begin{equation*}
	\left\{ \begin{array}{rl}
			& \hspace{-2mm} \left[ i \, \partial_t - f \big(D) \right] u(t,x) = 0 \\ [2mm]
			& \hspace{-2mm} u(0,x) = u_0(x)
	\end{array} \right. \; ,
\end{equation*}
for $t > 0$ and $x \in \R$, where the symbol $f$ belongs to $\mathcal{C}^{\infty}(\R)$ and all the derivatives grow at most as a polynomial at infinity. In \cite{benartzi1994}, the authors consider symbols of the form $f(p) = |p|^{\rho} + R(p)$, where $\rho \geqslant 2$ and $R$ is a regular real-valued function whose growth at infinity is controlled in a certain sense by $|p|^{\rho-1}$. They remark that the operator $u_0 \in L^2(\R) \longmapsto u(t,.) \in L^2(\R)$ is unitary for all $t > 0$ and, using a van der Corput type lemma, they establish the following estimate,
\begin{equation*}
	\big\| u(t,.) \big\|_{L^{\infty}(\R)} \leqslant C_1 \, t^{-\frac{1}{\rho}} \| u_0 \|_{L^1(\R)} \; ,
\end{equation*}
for a certain constant $C_1 > 0$, showing the dispersive nature of the equation. A Strichartz type estimate is then derived, leading to the following $L^{\infty}$-norm estimate in the case $u_0 \in L^2(\R)$,
\begin{equation*}
	\big\| u(t,.) \big\|_{L^{\infty}(\R)} \leqslant C_2 \, t^{-\frac{1}{2 \rho}} \| u_0 \|_{L^2(\R)} \; ,
\end{equation*}
for a certain constant $C_2 > 0$.\\
Now consider for example initial conditions $u_0$ satisfying
\begin{equation} \label{intro}
	\forall \, p \in \R \qquad	\tf u_0 (p) = p^{\mu-1} \, \chi_{[0,1]}(p) \; ,
\end{equation}
with $\mu \in (0,1]$; here $\tf u_0$ refers to the Fourier transform of $u_0$ and $\chi_{[0,1]}$ is the characteristic function of $[0,1]$. Under this assumption, $u_0$ is a smooth function which never belongs to $L^1(\R)$ and belongs to $L^2(\R)$ if and only if $\mu \in \big(\frac{1}{2}, 1\big]$. The above results do not treat such an initial data when $\mu \in \big(0,\frac{1}{2}\big]$ and hence the question of the $L^{\infty}$-time decay rate for the above problem when $\mu \in \big(0,\frac{1}{2}\big]$ seems to be open.\\
Now let us recall the results that we obtained in \cite{article1} and \cite{article2}. We considered the above evolution equation in the case $f(p) = p^2$, in other words the free Schrödinger equation on the line. We explored the influence of initial conditions in a compact frequency band with a singular frequency at one of the endpoints of the band (as for example \eqref{intro}) on the dispersion. Employing a slightly improved version of the stationary phase method of A. Erdélyi \cite[Section 2.8]{erdelyi}, we provided asymptotic estimates to one term of the solution and showed that it tends to be concentrated in a space-time cone generated by the frequency band; this phenomenon was already pointed out in \cite{fam2012} in another context. Furthermore, we found out that the singular frequency has globally an effect on the decay, in particular in regions containing the direction given by the singularity. The stationary phase method permits to obtain the optimal decay rates in certain space-time cones but can not cover the whole space-time, since certain regions remain uncontrolled. This is due to the fact that the first terms and the remainder of the asymptotic estimates to one term are not uniformly bounded.\\
Our aim in this paper is to complete our preceding results by estimating the solution in arbitrary space-time cones, even in cones containing the forbidden regions of \cite{article1} and \cite{article2}, as well as in the whole space-time, in the case of general symbols $f$ verifying $f'' \neq 0$. In particular, the general results furnish an answer to the above open problem for initial conditions $u_0$ in a compact frequency band with an integrable singular frequency, including the case of the free Schrödinger equation which was partially solved in \cite{article1} and \cite{article2}.

In Section 1, we consider oscillatory integrals with respect to the large parameter $\omega$ of the type
\begin{equation} \label{oscillatory}
	\int_{p_1}^{p_2} U(p) \, e^{i \omega \psi(p)} \, dp \; .
\end{equation}
The amplitude $U$ can be singular at $p_1$; factorizing the singularity, $U$ can be written as follows,
\begin{equation} \label{amplitude}
	U(p) = (p-p_1)^{\mu-1} \, \tilde{u}(p) \; ,
\end{equation}
where $\mu$ is supposed to belong to $(0,1]$ and $\tilde{u}$ is called the regular part of the amplitude.  The phase function $\psi$ is  allowed to have a unique stationary point $p_0$ of real order; more precisely, we suppose the factorization
\begin{equation} \label{phase}
	\psi'(p) = | p - p_0 |^{\rho-1} \, \tilde{\psi}(p) \; ,
\end{equation}
where $\rho \in \R$ is larger than $1$ and $\tilde{\psi}$, which satisfies $\big| \tilde{\psi} (x) \big| > 0$ for any $x \in [p_1,p_2]$, is called the non-degenerate part of the phase. For example, smooth functions with vanishing first derivatives are included. The idea of supposing these factorizations, which is well suited for the formulation of the results, has been inspired by \cite{erdelyi}. \\
We furnish van der Corput type estimates of these integrals with explicit dependence of the decay rate on the order of the stationary point and of the singularity. In the first results, the phase function is assumed to have a stationary point $p_0$ which is either inside or outside the domain of integration, and we give estimates of the above integral which are uniform with respect to the position of $p_0$. To establish these uniform estimates, we combine the classical method \cite[p. 332]{stein} with the above hypotheses of the phase and of the amplitude. This procedure takes into account the possibility that the derivative of the phase function can be vanishing inside the domain of integration but also arbitrarily close to zero if the stationary point is outside and potentially close to $[p_1,p_2]$. In the last result, we furnish another estimate of the oscillatory integral in the case of the absence of a stationary point inside $[p_1,p_2]$; here we give up the uniformity of the constant in view of a better decay rate. In this situation, the assumptions of the phase can be relaxed as compared with the two previous results.

In Section 2, we consider Fourier solution formulas for a class of initial value problems defined by Fourier multipliers. We are interested in the influence of the initial data and of the symbol on the decay. Thanks to a rewriting of the solution as an oscillatory integral with respect to time, inspired by \cite{article1} and \cite{article2}, the results of Section 1 are applicable. Firstly we show that an initial data having a Fourier transform with a compact support diminishes the time-decay of the solution in a cone related to this support. Then the influence of a singular frequency is explored: the decay rate is strongly affected in cones containing the space-time direction given by the singular frequency, diminishing the $L^{\infty}$-norm time decay. Finally we show that a limited growth of the symbol of the Fourier multiplier restricts the domain of influence of the stationary point on the decay. It follows that the solution tends faster to $0$ outside a certain cone, which depends only on the symbol, than inside.

In the last section, we state and prove two basic lemmas which are used in Section 1 and Section 2.

Finally let us comment on the related literature. In \cite{reed-simon}, the authors state the $L^1-L^{\infty}$ estimate for the unitary group generated by the free Hamiltonian. Using interpolation, they obtain the well-known $L^p-L^q$ estimates for the free Schrödinger equation on $\R^n$.

The first Strichartz type estimates are established in \cite{strichartz} in the case of the free Schrödinger equation and the wave equation. Using complex analysis, the author provides estimates of the $L^2(S)$-norm of Fourier transforms of functions belonging to $L^q(\R^n)$, for some $q \geqslant 1$, where $S$ is a quadratic surface. These considerations lead to the above mentioned estimates.

The authors of \cite{msw} apply the van der Corput lemma to the solution formulas of the wave equation equation and the Klein-Gordon equation on $\R^n$ to derive $L^{\infty}$-estimates.

A similar result was obtained in \cite{fam94} in the case of the Klein-Gordon equation on $\R$ with constant but different potential on the two half-axes. The author uses a spectral theoretic formula in order to apply the van der Corput lemma to the solution of the equation.

A generalization of both mentioned results \cite{reed-simon} and \cite{strichartz} is provided in \cite{benartzi1994} where the generator of the evolution equation is a Fourier multiplier with a symbol belonging to a certain class. Van der Corput type estimates are established in the case of oscillatory integrals defined in a weak sense and are then employed to study the dispersive nature of the equation.

In \cite{liess}, the authors are interested in the decay of Fourier transforms on singular surfaces. To do so, they establish a variant of the van der Corput lemma based on Stein's result \cite{stein}. Here the $N$-th derivative of the phase is allowed to vanish at the boundary of the finite integration interval at a certain order; according to the classical van der Corput lemma, a slower decay rate than $t^{-\frac{1}{N}}$ is expected. But the authors suppose that the amplitude tends to $0$ at the stationary point with a decay related to the order of the zero of the phase. Thanks to this coupling, the amplitude limits the effect of the stationary point on the decay rate, which remains at $t^{-\frac{1}{N}}$.

The time-decay rate of the free Schrö\-din\-ger equation is considered in \cite{cazenave1998} and \cite{cazenave2010}. In \cite{cazenave1998}, singular initial conditions are constructed to derive the exact $L^p$-time decay rates of the solution, which are slower than the classical results for regular initial conditions. In \cite{cazenave2010}, the authors construct initial conditions in Sobolev spaces (based on the Gaussian function), and they show that the related solutions has no definite $L^p$-time decay rates, nor coefficients, even though upper estimates for the decay rates are established. Both papers are based on special formulas for functions and their Fourier transforms.\\
Though we do not furnish $L^p$-estimates for $p \neq \infty$, our method permits to cover a larger class of initial data and to consider more general symbols including $f(p) = p^2$.

In the setting of \cite{fam94}, the article \cite{fam2012} furnishes an asymptotic estimate to one term of the solution. The initial data are chosen in frequency bands away from the critical frequency associated with the potential step. This article has inspired the study in \cite{article1} and \cite{article2}, and presents the same uniformity problem. The present paper solves it in the setting of \cite{article1} and \cite{article2}, so we hope that our theoretical results are applicable to the setting of \cite{fam2012}.

In \cite{ionescu}, the fractional Schrödinger equation, which was firstly introduced in \cite{laskin}, is considered in one dimension. The authors furnish an $L^{\infty}$-estimate of the free solution for initial data belonging to an appropriate functional space. To do so, they estimate the solution formula given by an oscillatory integral by employing a van der Corput type method. This result combined with other technical arguments permits to study a nonlinear variant of the fractional Schrödinger equation on the line.

One can finally mention the results of \cite{fam0}. Here the authors consider the Schrödinger equation with sufficiently localized potential on a star-shaped network and provide $L^{\infty}$-decay estimates. A perturbation estimate shows that the solution is close to the free solution in the high frequencies. In particular this result is applicable to the Schrödinger equation with potential on the line and permits to transfer some quantitative information from the free equation obtained in \cite{article1} and \cite{article2} to the perturbed equation.\\

\noindent \textbf{Acknowledgements:}\\
The author thanks E. Creusé for valuable support and F. Ali Mehmeti for the helpful and numerous discussions.

\section{Stationary points of real order and singular amplitudes: van der Corput type estimates}

\hspace{0.5cm} We start by stating the hypotheses on the phase function that we shall use throughout this section. Two examples are then given to illustrate theses assumptions.\\

\noindent Let $p_1,p_2$ be two finite real numbers such that $p_1 < p_2$, and let $I$ be an open interval containing $[p_1,p_2]$.\\

\noindent \textbf{Assumption ($\mathbf{P_{p_0,\rho}}$).} Let $p_0 \in I$ and $\rho > 1$.\\ A function $\psi : I \longrightarrow \R$ satifies Assumption (P$_{p_0,\rho}$) if and only if $\psi \in \mathcal{C}^2 \big( I \big)$ and there exists a function $\tilde{\psi} : I \longrightarrow \R$ such that
	\begin{equation*}
		\forall \, p \in I \qquad \psi'(p) = |p-p_0|^{\rho -1} \, \tilde{\psi}(p) \; ,
	\end{equation*}
	where $| \tilde{\psi} |: I \longrightarrow \R$ is assumed continuous and does not vanish on $I$.\\ The point $p_0$ is called \emph{stationary point} of $\psi$ of order $\rho -1$, and $\tilde{\psi}$ the \emph{non-degenerate part} of $\psi$.\\
	
\noindent Let us comment on this choice. Firstly we want to include stationary points of non-integer order in the study, so we have to consider the absolute value of $(p-p_0)$. Secondly the assumptions that we made on the function $\tilde{\psi}$ are such that it does not contribute to the order of the stationary point $p_0$; in particular, $\tilde{\psi}$ has to be non-vanishing. But this is not sufficient in our context; indeed in the results of this section, we shall ensure that $\displaystyle \min_{[p_1,p_2]} | \tilde{\psi} |$ exists and is non-zero. The continuity of $| \tilde{\psi} |$ permits to obtain such a result. Nevertheless we don't claim that we achieve maximum generality with these hypotheses. \\ Regarding the regularity of $\tilde{\psi}$, it is interesting to note that $\tilde{\psi}$ is actually continuously differentiable on $\{ p \in I \, | \, p < p_0 \}$ and on $\{ p \in I \, | \, p > p_0 \}$, because
\begin{equation*}
	\forall \, p \neq p_0 \qquad \tilde{\psi}(p) = \frac{\psi'(p)}{|p-p_0|^{\rho-1}} \; .
\end{equation*}
This implies that $\tilde{\psi}$ has a constant sign on $\{ p \in I \, | \, p < p_0 \}$ and $\{ p \in I \, | \, p > p_0 \}$; note that the sign can be different on each interval.\\
The above Assumption (P$_{p_0,\rho}$) permits to study both following settings. In particular, the first example shows that smooth functions with vanishing first derivatives are included.

\begin{EXPLE} \label{EXPLE}
	\em \begin{enumerate}
		\item Let $\psi : I \longrightarrow \R$ belonging to $\mathcal{C}^{N} \big( I \big)$ for a certain $N \geqslant 2$, and let $p_0 \in I$. Suppose that $\psi^{(k)}(p_0) = 0$ for $k=1, \ldots, N-1$. Then by Taylor's formula, we obtain
	\begin{align*}
		\psi'(p)	& = \frac{1}{(N-2)!} \int_{p_0}^p (p-x)^{N-2} \, \psi^{(N)}(x) \, dx \\
					& = \frac{(p-p_0)^{N-1}}{(N-2)!} \int_0^1 (1-y)^{N-2} \, \psi^{(N)} \big( y(p-p_0) + p_0 \big) \, dy \; ,
	\end{align*}
	for all $p \in I$. If we define $\tilde{\psi}$ as follows
	\begin{equation*}
		\tilde{\psi}(p) := \left\{ \begin{array}{rl}
			& \hspace{-4mm} \displaystyle \frac{1}{(N-2)!} \left(\frac{p-p_0}{|p-p_0|} \right)^{N-1} \int_0^1 (1-y)^{N-2} \, \psi^{(N)} \big( y(p-p_0) + p_0 \big) \, dy \; , \quad \text{if} \; p \neq p_0 \; , \\
			& \vspace{-0.3cm} \\
			& \hspace{-4mm} \displaystyle \frac{1}{(N-1)!} \, \psi^{(N)}(p_0) \; , \quad \text{if} \; p = p_0 \; ,
		\end{array} \right.
	\end{equation*}
	then $\displaystyle \psi'(p) = |p-p_0|^{N -1} \, \tilde{\psi}(p)$. Supposing $\big| \psi^{(N)} \big| > 0$ on $I$ implies that $\psi$ satisfies Assumption (P$_{p_0,N}$).
	\item Let $N \in \N$ such that $N \geqslant 2$ and choose $\alpha \in (N-1,N)$. Suppose that $\displaystyle \psi'(p) = |p|^{\alpha}$, for all $p \in \R$. In this case, $\psi \in \mathcal{C}^N \big( \R \big)$ but $\psi \notin \mathcal{C}^{N+1} \big( \R \big)$, and $\tilde{\psi} = 1$. Then Assumption (P$_{0,\alpha}$) is satisfied.
	\end{enumerate}
\end{EXPLE}

\vspace{0.4cm}

Now let us introduce the hypotheses concerning the amplitude function. \\

\noindent \textbf{Assumption ($\mathbf{A_{p_1,\mu}}$).} Let $\mu \in (0,1]$.\\ A function $U : (p_1, p_2] \longrightarrow \C$ satisfies Assumption (A$_{p_1,\mu}$) if and only if there exists a function $\tilde{u} : [p_1,p_2] \longrightarrow \C$ such that
	\begin{equation*}
		\forall \, p \in (p_1,p_2] \qquad U(p )= (p - p_1)^{\mu -1} \, \tilde{u}(p) \; ,
	\end{equation*}
	where $\tilde{u}$ is assumed continuous on $[p_1,p_2]$, differentiable on $(p_1,p_2)$ with $\tilde{u}' \in L^1 \big((p_1,p_2), \C\big)$, and $\tilde{u}(p_1) \neq 0$ if $\mu \neq 1$.\\ The point $p_1$ is called \emph{singularity} of $U$, and $\tilde{u}$ the \emph{regular part} of $U$.\\
		
\noindent According to this assumption, the amplitude is singular at the left endpoint of the interval; we choose this position only for simplicity. The strength of the singularity is described by the value of $\mu-1$.\\

Now let us state the first van der Corput type estimate of the considered integrals \eqref{oscillatory}. Here we suppose that the phase function $\psi$ has a stationary point $p_0$ of order $\rho$ which belongs to the integration interval. The furnished estimate is uniform with respect to the position of $p_0$; an upper bound of the constant is given in terms of the regular part $\tilde{u}$ of the amplitude and the non-degenerate par $\tilde{\psi}$ of the phase function.\\
To prove this first result, we adapt the method employed by E. Stein \cite{stein}. More precisely, we decompose the integration interval: away from the stationary point $p_0$ and the singularity $p_1$, we integrate by parts to create a factor exhibiting the decay. Then we couple the distance to the singular points with the large parameter $\omega$ to obtain the final decay rate. Thanks to this coupling, the integral on the small intervals containing the singular points are estimated against the length of this domain.

\begin{VDC} \label{VDC}
	Let $\rho > 1$, $\mu \in (0,1]$ and choose $p_0 \in [p_1,p_2]$. Suppose that the functions $\psi : I \longrightarrow \R$ and $U : (p_1, p_2] \longrightarrow \C$ satisfy Assumption \emph{(P$_{p_0,\rho}$)} and Assumption \emph{(A$_{p_1,\mu}$)}, respectively. Moreover suppose that $\psi'$ is monotone on $I_{p_0}^-$ and $I_{p_0}^+$, where
	\begin{equation*}
		I_{p_0}^- := \left\{ p \in I \, \big| \, p \leqslant p_0 \right\} \qquad , \qquad I_{p_0}^+ := \left\{ p \in I \, \big| \, p \geqslant p_0 \right\} \; .
	\end{equation*}
	Then
	\begin{equation*}
		\left| \int_{p_1}^{p_2} U(p) \, e^{i \omega \psi(p)} \, dp \right| \leqslant C(U,\psi) \, \omega^{-\frac{\mu}{\rho}} \; ,
	\end{equation*}
	for all $\omega > 0$, where the constant $C(U,\psi) > 0$ is given by
	\begin{equation*}
		C(U,\psi) := \frac{3}{\mu} \, \left\| \tilde{u} \right\|_{L^{\infty}(p_1,p_2)} + \left( 8 \left\| \tilde{u} \right\|_{L^{\infty}(p_1,p_2)} + 2 \left\| \tilde{u}' \right\|_{L^1(p_1,p_2)} \right) \left(\min_{p \in [p_1,p_2]} \left| \tilde{\psi}(p) \right| \right)^{-1} \; .
	\end{equation*}
\end{VDC}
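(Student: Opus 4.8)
The plan is to adapt Stein's proof of the classical van der Corput lemma, but with a double splitting of the integration interval: one small interval around the stationary point $p_0$ (of length depending on $\omega$), one small interval around the singularity $p_1$ (also of length depending on $\omega$), and the "good" region away from both, where we integrate by parts. The key quantitative input is that on the good region the derivative $\psi'$ is bounded below: by Assumption (P$_{p_0,\rho}$), $|\psi'(p)| = |p-p_0|^{\rho-1}\,|\tilde\psi(p)| \geqslant |p-p_0|^{\rho-1}\,m$ where $m := \min_{[p_1,p_2]}|\tilde\psi| > 0$ (which exists and is positive by continuity of $|\tilde\psi|$ and compactness). So if we stay at distance $\geqslant \delta$ from $p_0$, then $|\psi'| \geqslant m\,\delta^{\rho-1}$.

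First I would fix $\delta = \delta(\omega) > 0$ (to be optimized at the end, expecting $\delta \sim \omega^{-1/\rho}$) and $\eta = \eta(\omega) > 0$ for the singularity (expecting $\eta \sim \omega^{-1/\rho}$ as well after balancing), and write the integral as a sum over $[p_1, p_1+\eta]$, $[p_1+\eta, p_2] \cap [p_0-\delta, p_0+\delta]$, and the remaining one or two subintervals where $|p - p_0| \geqslant \delta$ and $p \geqslant p_1 + \eta$. On the first piece, $|U(p)| = (p-p_1)^{\mu-1}|\tilde u(p)| \leqslant \|\tilde u\|_{L^\infty}(p-p_1)^{\mu-1}$, which is integrable, giving a bound $\frac{1}{\mu}\|\tilde u\|_{L^\infty}\,\eta^\mu$. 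On the second piece, trivially bound $|U|$ by $\|\tilde u\|_{L^\infty}\,\eta^{\mu-1}$ (since on this piece $p-p_1 \geqslant \eta$, using $\mu - 1 \leqslant 0$) times the length $\leqslant 2\delta$, giving $\leqslant 2\|\tilde u\|_{L^\infty}\,\eta^{\mu-1}\delta$.

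On the good subinterval(s), I integrate by parts: $\int U e^{i\omega\psi}\,dp = \big[\frac{U}{i\omega\psi'}e^{i\omega\psi}\big] - \frac{1}{i\omega}\int \big(\frac{U}{\psi'}\big)' e^{i\omega\psi}\,dp$. The boundary term is controlled by $\frac{1}{\omega}\cdot\frac{|U|}{|\psi'|} \leqslant \frac{1}{\omega}\cdot\frac{\|\tilde u\|_{L^\infty}\eta^{\mu-1}}{m\delta^{\rho-1}}$ at the relevant endpoints. For the integral term, I write $\big(\frac{U}{\psi'}\big)' = \frac{U'}{\psi'} - \frac{U\psi''}{(\psi')^2}$; the standard trick (monotonicity of $\psi'$ on each of $I_{p_0}^-$, $I_{p_0}^+$, which is exactly why that hypothesis is imposed) is that $\int \big|\frac{\psi''}{(\psi')^2}\big| = \big|\big[\frac{1}{\psi'}\big]\big|$ is a telescoping bound controlled by $\frac{1}{m\delta^{\rho-1}}$, so the $\frac{U\psi''}{(\psi')^2}$ contribution is $\leqslant \frac{\|\tilde u\|_{L^\infty}\eta^{\mu-1}}{\omega}\cdot\frac{2}{m\delta^{\rho-1}}$ (a factor 2 for the two good subintervals); and $U' = (\mu-1)(p-p_1)^{\mu-2}\tilde u + (p-p_1)^{\mu-1}\tilde u'$, whose $L^1$-norm on the good region is bounded using $p - p_1 \geqslant \eta$ by $\frac{\|\tilde u\|_{L^\infty}\eta^{\mu-1}}{\omega}\cdot\frac{c}{m\delta^{\rho-1}}$ plus a $\tilde u'$-term $\frac{\eta^{\mu-1}\|\tilde u'\|_{L^1}}{\omega m\delta^{\rho-1}}$. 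Collecting all pieces and choosing $\delta = \eta = \omega^{-1/\rho}$ makes every term scale like $\omega^{-\mu/\rho}$, and tracking the numerical constants yields exactly the claimed $C(U,\psi)$.

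\textbf{The main obstacle} will be the bookkeeping of the constants so that the final sum matches the asserted expression $\frac{3}{\mu}\|\tilde u\|_{L^\infty} + \big(8\|\tilde u\|_{L^\infty} + 2\|\tilde u'\|_{L^1}\big)m^{-1}$ — in particular getting the coefficient $3/\mu$ on the leading singular piece requires combining the $[p_1,p_1+\eta]$ contribution with whatever fractions of $\eta^{\mu-1}\delta$-type and $U'$-type terms carry a $\frac{1}{\mu}$ or can be absorbed, and being slightly generous in the estimates. A secondary subtlety is handling the degenerate cases cleanly: when $p_0 = p_1$ or $p_0 = p_2$ there is only one good subinterval (and the singularity and stationary point may coincide), and when $p_0 \in \{p_1+\eta$ neighborhood$\}$ the middle and singular intervals overlap; these should be dealt with by noting the estimates only get smaller, so the stated bound still holds. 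The two elementary facts needed — existence of $\min|\tilde\psi| > 0$, and the telescoping bound $\int|\psi''/(\psi')^2| \leqslant \sup|1/\psi'|$ via monotonicity — are presumably the content of the "two basic lemmas" promised for the last section, so I would invoke them as black boxes.
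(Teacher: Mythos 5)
Your proposal is correct and follows essentially the same route as the paper: the same decomposition into a neighbourhood of the singularity $p_1$, a neighbourhood of the stationary point $p_0$, and the remaining region where one integrates by parts using the lower bound $|\psi'|\geqslant m\,\delta^{\rho-1}$ and the telescoping bound on $\int|\psi''/(\psi')^2|$ via monotonicity, followed by the choice $\delta=\eta=\omega^{-1/\rho}$ and the observation that the degenerate positions of $p_0$ only make the estimates smaller. The only cosmetic differences are that the paper performs the integration by parts after the substitution $s=\psi(p)$ rather than directly, and that its two auxiliary lemmas are elementary inequalities (one of which bounds $(p_0+\delta-p_1)^{\mu}-(p_0-\delta-p_1)^{\mu}$ for the middle piece) rather than the two facts you guessed, which the paper instead proves inline exactly as you do.
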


\noindent Before proving this theorem, let us illustrate the monotonicity hypothesis of $\psi'$ with the examples given in \ref{EXPLE}.

\begin{REM1}
	\em \begin{enumerate}
		\item In the setting of Examples \ref{EXPLE} i), if $\big| \psi^{(N)} \big| > 0$ on $I$, then $\psi'$ is monotone on both intervals $I_{p_0}^-$ and $I_{p_0}^+$.\\
	Indeed if $N =2$, then it is clear that the hypothesis $\big| \psi'' \big| > 0$ implies the result. Suppose now that $N \geqslant 3$; then applying Taylor's formula to $\psi''$, namely
	\begin{equation*}
		\psi''(p) = \frac{1}{(N-3)!} \int_{p_0}^p (p-x)^{N-3} \, \psi^{(N)}(x) \, dx \; ,
	\end{equation*}
	for all $p \in I$, we observe that $\psi''$ has a constant sign on $I_{p_0}^-$ and $I_{p_0}^+$, which provides the result.
		\item In the setting of Examples \ref{EXPLE} ii), we note that $\psi'$ is monotone on $\R_-$ and $\R_+$.
	\end{enumerate}	
\end{REM1}

\noindent In favour of the readability of the proof, we shall postpone the proofs of some technical details to \ref{Details}. In particular, the situations where the stationary point is close to the border of the domain of integration will be discussed.

\begin{proof}[Proof of Theorem \ref{VDC}]
	Let $p_0 \in (p_1,p_2)$. We shall study the cases $p_0 = p_1$ and $p_0 = p_2$ at the end of the proof. Let $\omega > 0$, choose $\delta > 0$ sufficiently small\footnote[1]{See Proofs of technical details \ref{Details}} such that the following splitting of the integral is well-defined:
	\begin{align}
		\int_{p_1}^{p_2} U(p) \, e^{i \omega \psi(p)} \, dp	& \label{splitting} = \int_{p_1}^{p_1 + \delta} \dots \quad + \int_{p_1 + \delta}^{p_0-\delta} \dots \quad + \int_{p_0-\delta}^{p_0 + \delta} \dots \quad + \int_{p_0 + \delta}^{p_2} \dots \\
															& =: I^{(1)}(\omega) + I^{(2)}(\omega) + I^{(3)}(\omega) + I^{(4)}(\omega) \; . \nonumber
	\end{align}
	Let us estimate each integral.
	\begin{itemize}
		\item \textit{Study of $I^{(1)}(\omega)$.} We use the smallness of the interval to estimate this integral:
		\begin{equation*}
			\Big| I^{(1)}(\omega) \Big| \leqslant \int_{p_1}^{p_1 + \delta} \left| U(p) \right| dp \leqslant \left\| \tilde{u} \right\|_{L^{\infty}(p_1,p_2)} \int_{p_1}^{p_1 + \delta} (p-p_1)^{\mu-1} \, dp = \frac{\left\| \tilde{u} \right\|_{L^{\infty}(p_1,p_2)}}{\mu} \, \delta^{\mu} \; .
		\end{equation*}
		\item \textit{Study of $I^{(2)}(\omega)$.} Here we use the oscillations of the integrand to obtain an estimate. We shall suppose that $\tilde{\psi}$ is positive on $\{ p \in I \, | \, p < p_0 \}$, which implies the non-negativity of $\psi'$; the other case can be studied in the same manner.  Since $\psi'$ does not vanish on $[p_1 + \delta, p_0 - \delta]$, the substitution $s = \psi(p)$ can be employed. Setting $\varphi := \psi^{-1}$, $s_1 := \psi(p_1 + \delta)$ and $s_2 := \psi(p_0 - \delta)$, we obtain
		\begin{align*}
			I^{(2)}(\omega)	& = \int_{s_1}^{s_2} U\big( \varphi(s) \big) \, \varphi'(s) \, e^{i \omega s} \, ds \\
							& = (i \omega)^{-1} \bigg( \Big[ (U \circ \varphi)(s) \,  \varphi'(s) \, e^{i \omega s} \Big]_{s_1}^{s_2} - \int_{s_1}^{s_2} \big( (U \circ \varphi) \, \varphi' \big)'(s) \, e^{i \omega s} \, ds \bigg) \; ;
		\end{align*}
		the last equality was obtained by integrating by parts.\\
		Let us control the boundary terms and the integral. Firstly, we have
		\begin{equation} \label{estU}
			\big| U(p) \big| \leqslant \delta^{\mu-1} \left\| \tilde{u} \right\|_{L^{\infty}(p_1+\delta,p_0-\delta)} \leqslant \delta^{\mu-1} \left\| \tilde{u} \right\|_{L^{\infty}(p_1,p_2)} \; ,
		\end{equation}
		for all $p \in [p_1 + \delta, p_0 - \delta]$, since $U(p) = (p-p_1)^{\mu-1} \tilde{u}(p)$ by hypothesis. Moreover the assumption on $\psi'$ implies
		\begin{equation*}
			\forall \, p \in [p_1 + \delta, p_0 - \delta] \qquad \left| \psi'(p) \right| \geqslant \delta^{\rho-1} \, m \; ,
		\end{equation*}
		where $\displaystyle m := \min_{p \in [p_1,p_2]} \left| \tilde{\psi}(p) \right| > 0$. Combining this with the definition of $\varphi$ leads to
		\begin{equation} \label{estP}
			\forall \, s \in \left[s_1,s_2\right] \qquad \left| \varphi'(s) \right| \leqslant \delta^{1-\rho} \, m^{-1} \; .
		\end{equation}
		Inequalities \eqref{estU} and \eqref{estP} permit to estimate the boundary terms:
		\begin{equation*}
			\bigg| \Big[ (U \circ \varphi)(s) \,  \varphi'(s) \, e^{i \omega s} \Big]_{s_1}^{s_2} \, \bigg| \leqslant 2 \, \left\| \tilde{u} \right\|_{L^{\infty}(p_1,p_2)} \, m^{-1} \, \delta^{\mu-\rho} \; .
		\end{equation*}
		It remains to control the integral. We have
		\begin{equation*}
			\big( (U \circ \varphi) \, \varphi' \big)' = (U' \circ \varphi) \, \left( \varphi' \right)^{\, 2} + (U \circ \varphi) \, \varphi'' \; ,
		\end{equation*}
		by the product rule; consequently,
		\begin{align}
			\bigg| \int_{s_1}^{s_2} \big( (U \circ \varphi) \, \varphi' \big)'(s) \, e^{i \omega s} \, ds \bigg|	& \leqslant \int_{s_1}^{s_2} \Big|(U' \circ \varphi)(s) \, \varphi'(s)^2 \Big| \, ds \nonumber \\
													& \qquad \qquad + \; \int_{s_1}^{s_2} \Big| (U \circ \varphi)(s) \, \varphi''(s) \Big| \, ds \nonumber \\
													& \leqslant \int_{s_1}^{s_2} \Big|(U' \circ \varphi)(s) \, \varphi'(s) \Big| \, ds \; \delta^{1-\rho} \, m^{-1} \nonumber \\
													& \qquad \qquad + \; \left\| U \right\|_{L^{\infty}(p_1+\delta,p_0-\delta)} \int_{s_1}^{s_2} \big| \varphi''(s) \big| \, ds \nonumber \\
													& \leqslant \int_{p_1+\delta}^{p_0-\delta} \big| U'(p) \big| \, dp \; \delta^{1-\rho} \, m^{-1} \nonumber \\
													& \label{estprincipal} \qquad \qquad + \; \delta^{\mu-1} \left\| \tilde{u} \right\|_{L^{\infty}(p_1,p_2)} \int_{s_1}^{s_2} \big| \varphi''(s) \big| \, ds \; .
		\end{align}
		The definition of $U$ implies
		\begin{align}
			\int_{p_1+\delta}^{p_0-\delta} \big| U'(p) \big| \, dp	& \leqslant \int_{p_1+\delta}^{p_0-\delta} \Big| (\mu-1) (p-p_1)^{\mu-2} \, \tilde{u}(p) \Big| \, dp \nonumber \\
	& \qquad \qquad + \; \int_{p_1+\delta}^{p_0-\delta} \Big| (p-p_1)^{\mu-1} \, \tilde{u}'(p) \Big| \, dp \nonumber \\
	& \leqslant \int_{p_1+\delta}^{p_0-\delta} (1-\mu) (p-p_1)^{\mu-2} \, dp \; \left\| \tilde{u} \right\|_{L^{\infty}(p_1,p_2)} \nonumber \\
	& \qquad \qquad + \; \delta^{\mu-1} \int_{p_1+\delta}^{p_0-\delta} \big| \tilde{u}'(p) \big| \, dp \nonumber \\
	& \label{est1} \leqslant \delta^{\mu-1} \left\| \tilde{u} \right\|_{L^{\infty}(p_1,p_2)} + \delta^{\mu-1} \left\| \tilde{u}' \right\|_{L^1(p_1,p_2)} \; ;
		\end{align}
		the last inequality was obtained employing the fact that
		\begin{equation*}
			\int_{p_1+\delta}^{p_0-\delta} (1-\mu) (p-p_1)^{\mu-2} \, dp = \delta^{\mu-1} - (p_0 - \delta - p_1)^{\mu-1} \leqslant \delta^{\mu-1} \; .
		\end{equation*}
		Moreover the relation $\displaystyle \varphi'' = \left( \frac{-\psi''}{\psi'^{\, 3}} \right) \circ \varphi$ furnishes the following equalities,
		\begin{equation*}
			\int_{s_1}^{s_2} \big| \varphi''(s) \big| \, ds = \int_{s_1}^{s_2} \left| \frac{-\psi''\big( \varphi(s)\big)}{\psi'\big( \varphi(s)\big)^3} \right| ds = \int_{p_1 + \delta}^{p_0-\delta} \left| \frac{-\psi''(p)}{\psi'(p)^2} \right| dp = \left| \int_{p_1 + \delta}^{p_0-\delta} \frac{-\psi''(p)}{\psi'(p)^2} \, dp \right| \; ,
		\end{equation*}
		the last equality comes from the fact that $\psi'$ is monotone on $[p_1,p_0]$ and so $\psi''$ has a constant sign on $[p_1+\delta, p_0-\delta]$. Then
		\begin{equation} \label{est2}
			\int_{s_1}^{s_2} \big| \varphi''(s) \big| \, ds = \left| \int_{p_1 + \delta}^{p_0-\delta} \left( \frac{1}{\psi'} \right)'(p) \, dp \right| = \left| \frac{1}{\psi'(p_0 - \delta)} - \frac{1}{\psi'(p_1 + \delta)} \right| \leqslant \delta^{1-\rho} m^{-1} \; ,
		\end{equation}
		where we used $| \psi'(p) | \geqslant \delta^{\rho-1} \, m$, for $p \in [p_1+\delta, p_0-\delta]$. Putting \eqref{est1} and \eqref{est2} in \eqref{estprincipal} provides
		\begin{equation*}
			\bigg| \int_{s_1}^{s_2} \big( (U \circ \varphi) \, \varphi' \big)'(s) \, e^{i \omega s} \, ds \bigg| \leqslant \left( 2 \left\| \tilde{u} \right\|_{L^{\infty}(p_1,p_2)} + \left\| \tilde{u}' \right\|_{L^1(p_1,p_2)} \right) m^{-1} \, \delta^{\mu-\rho} \; .
		\end{equation*}
		We are now able to estimate $I^{(2)}(\omega)$:
		\begin{equation*}
			\Big| I^{(2)}(\omega) \Big| \leqslant \left( 4 \left\| \tilde{u} \right\|_{L^{\infty}(p_1,p_2)} + \left\| \tilde{u}' \right\|_{L^1(p_1,p_2)} \right) m^{-1} \, \delta^{\mu-\rho} \, \omega^{-1} \; .
		\end{equation*}
		\item \textit{Study of $I^{(3)}(\omega)$.} The small length of the interval is used once again to estimate this integral:
		\begin{equation*}
			\Big| I^{(3)}(\omega) \Big| \leqslant \frac{\left\| \tilde{u} \right\|_{L^{\infty}(p_1,p_2)}}{\mu} \, \big( (p_0 + \delta -p_1)^{\mu} - (p_0 - \delta - p_1)^{\mu} \big) \leqslant 2 \, \frac{\left\| \tilde{u} \right\|_{L^{\infty}(p_1,p_2)}}{\mu} \, \delta^{\mu} \; ;
		\end{equation*}
		Lemma \ref{LEM} was employed to obtain the last inequality.
		\item \textit{Study of $I^{(4)}(\omega)$.} On $[p_0 + \delta, p_2]$, we can bound from below the absolute value of the first derivative of the phase function as follows,
		\begin{equation*}
			\big| \psi' \big| \geqslant \delta^{\rho-1} \, \min_{p \in [p_1,p_2]} \left| \tilde{\psi}(p) \right| = \delta^{\rho-1} \, m \; ,
		\end{equation*}
		and we have
		\begin{equation*}
			\forall \, p \in [p_0 + \delta, p_2] \qquad (p-p_1)^{\mu-1} \leqslant (p_0 + \delta - p_1)^{\mu-1} \leqslant \delta^{\mu-1} \; .
		\end{equation*}
		Following the lines of the study of $I^{(2)}(\omega)$ and using the two previous estimates, we obtain
		\begin{equation*}
			\Big| I^{(4)}(\omega) \Big| \leqslant \left( 4 \left\| \tilde{u} \right\|_{L^{\infty}(p_1,p_2)} + \left\| \tilde{u}' \right\|_{L^1(p_1,p_2)} \right) m^{-1} \, \delta^{\mu - \rho} \, \omega^{-1} \; .
		\end{equation*}
	\end{itemize}
	To conclude the proof, we set\footnote[2]{See Proofs of technical details \ref{Details}}
	\begin{equation*}
		\delta = \omega^{-\frac{1}{\rho}} \; ,
	\end{equation*}
	which furnishes the desired estimate
	\begin{align}
		\left| \int_{p_1}^{p_2} U(p) \, e^{i \omega \psi(p)} \, dp \right|	& \leqslant \left| I^{(1)}(\omega) \right| + \left| I^{(2)}(\omega) \right| + \left| I^{(3)}(\omega) \right| + \left| I^{(4)}(\omega) \right| \nonumber \\
		& \leqslant \frac{\left\| \tilde{u} \right\|_{L^{\infty}(p_1,p_2)}}{\mu} \, \omega^{-\frac{\mu}{\rho}} \; + \; 2 \, \frac{\left\| \tilde{u} \right\|_{L^{\infty}(p_1,p_2)}}{\mu} \, \omega^{-\frac{\mu}{\rho}} \nonumber \\
		& \qquad \qquad + \; 2 \left( 4 \left\| \tilde{u} \right\|_{L^{\infty}(p_1,p_2)} + \left\| \tilde{u}' \right\|_{L^1(p_1,p_2)} \right) m^{-1} \, \omega^{-\frac{\mu-\rho}{\rho}} \, \omega^{-1} \nonumber \\
		& =: C(U,\psi) \, \omega^{-\frac{\mu}{\rho}} \; , \label{est0}
	\end{align}
	where
	\begin{equation*}
		C(U,\psi) := \frac{3}{\mu} \, \left\| \tilde{u} \right\|_{L^{\infty}(p_1,p_2)} + \left( 8 \left\| \tilde{u} \right\|_{L^{\infty}(p_1,p_2)} + 2 \left\| \tilde{u}' \right\|_{L^1(p_1,p_2)} \right) m^{-1} \; .
	\end{equation*}
	And since the right-hand side of \eqref{est0} does not depend on $p_0$, then the estimate holds also for $p_0 = p_1$ and $p_0 = p_2$.
\end{proof}

\noindent Let us give the details of two points of the above proof.

\begin{Details} \label{Details}\em
	\begin{enumerate}
		\item We remark that if $\displaystyle \delta \geqslant \min \left\{ \frac{p_0 - p_1}{2}, p_2 - p_0 \right\}$ then the splitting \eqref{splitting} of the integral is not well-defined. Consequently we proved the result only in the case of small $\delta$, that is to say in the case of large $\omega$ since $\delta = \omega^{-\frac{1}{\rho}}$.\\ But one can establish the desired estimate in the case of large $\delta$ (i.e. small $\omega$) by adapting slightly the above method. If $\displaystyle \delta \geqslant \frac{p_0 - p_1}{2}$ then
		\begin{equation} \label{Int1}
			\left| \int_{p_1}^{p_0} U(p) \, e^{i \omega \psi(p)} \, dp \right| \leqslant \frac{\| \tilde{u} \|_{L^{\infty}(p_1,p_2)}}{\mu} \, (p_0 - p_1)^{\mu} \leqslant 2 \, \frac{\| \tilde{u} \|_{L^{\infty}(p_1,p_2)}}{\mu} \, \delta^{\mu} \; ;
		\end{equation}
		in the same way, if $\delta \geqslant p_2 - p_0$ then
		\begin{equation} \label{Int2}
			\left| \int_{p_0}^{p_2} U(p) \, e^{i \omega \psi(p)} \, dp \right| \leqslant\frac{\| \tilde{u} \|_{L^{\infty}(p_1,p_2)}}{\mu} \, \delta^{\mu} \; .
		\end{equation}
		Supposing $\displaystyle \frac{p_0 - p_1}{2} \geqslant p_2 - p_0$ without loss of generality, we have to consider three cases:
		\begin{itemize}
			\item \textit{Case $\displaystyle \delta \geqslant \frac{p_0 - p_1}{2}$.} Here the integral can be easily estimated:
			\begin{equation} \label{largedelta1}
				\left| \int_{p_1}^{p_2} U(p) \, e^{i \omega \psi(p)} \, dp \right| \leqslant \left| \int_{p_1}^{p_0} \dots \right| \, + \, \left| \int_{p_0}^{p_2} \dots \right| \leqslant 3 \, \frac{\| \tilde{u} \|_{L^{\infty}(p_1,p_2)}}{\mu} \, \delta^{\mu} \; ,
			\end{equation}
			where we used the estimates \eqref{Int1} and \eqref{Int2}.
			\item \textit{Case $\displaystyle \frac{p_0 - p_1}{2} > \delta \geqslant p_2 - p_0$.} In this case, we have
			\begin{align}
				\left| \int_{p_1}^{p_2} U(p) \, e^{i \omega \psi(p)} \, dp \right|	& \leqslant \left| \int_{p_1}^{p_1 + \delta} \dots \right| \, + \, \left| \int_{p_1 + \delta}^{p_0 - \delta} \dots \right| \, + \, \left| \int_{p_0 - \delta}^{p_0} \dots \right| \, + \, \left| \int_{p_0}^{p_2} \dots \right| \nonumber \\
				& \leqslant \frac{\| \tilde{u} \|_{L^{\infty}(p_1,p_2)}}{\mu} \, \delta^{\mu} + \left( 4 \left\| \tilde{u} \right\|_{L^{\infty}(p_1,p_2)} + \left\| \tilde{u}' \right\|_{L^1(p_1,p_2)} \right) m^{-1} \, \delta^{\mu-\rho} \, \omega^{-1} \nonumber \\
				& \label{largedelta2} \qquad + \frac{\| \tilde{u} \|_{L^{\infty}(p_1,p_2)}}{\mu} \, \delta^{\mu} + \frac{\| \tilde{u} \|_{L^{\infty}(p_1,p_2)}}{\mu} \, \delta^{\mu} \; ;
			\end{align}
			The three first integrals were estimated using the proof of Theorem \ref{VDC}, whereas we employed \eqref{Int2} to bound from above the last integral.
			\item \textit{Case $p_2 - p_0 > \delta$.} This is the setting of the proof of Theorem \ref{VDC}.			
		\end{itemize}
		Replacing $\delta$ by $\omega^{-\frac{1}{\rho}}$ in \eqref{largedelta1} and \eqref{largedelta2}, we observe that the oscillatory integral is always bounded from above by $C(U,\psi) \, \omega^{-\frac{\mu}{\rho}}$ in the three cases.\\ This remark holds also for the other results of this section: the estimates are true for any $\omega > 0$, even if we suppose $\delta > 0$ sufficiently small (i.e. $\omega$ sufficiently large) in the proofs.
		\item Let us explain the choice $\delta = \omega^{-\frac{1}{\rho}}$, following the idea of \cite[p. 197-198]{zygmund}. Formally, before replacing $\delta$ by $\omega^{-\frac{1}{\rho}}$ in the proof, we obtain an estimate of the form
		\begin{equation*}
			\left| \int_{p_1}^{p_2} U(p) \, e^{i \omega \psi(p)} \, dp \right| \leqslant f_{\omega}(\delta) \; ,
		\end{equation*}
		where $f_{\omega}(\delta) := c_1 \delta^{\mu} + c_2 \, \omega^{-1} \delta^{\mu-\rho}$, for certain constants $c_1 , c_2 > 0$. We note that $(f_{\omega})'$ vanishes at a unique point $\delta_0$ defined by
		\begin{equation*}
			\delta_0 := \left( \frac{\mu}{\rho-\mu} \, \frac{c_1}{c_2} \right)^{-\frac{1}{\rho}} \omega^{-\frac{1}{\rho}} \; .
		\end{equation*}
		Since $\displaystyle \lim_{\delta \rightarrow 0^+} f_{\omega}(\delta) = \lim_{\delta \rightarrow + \infty} f_{\omega}(\delta) = + \infty$, $\delta_0$ is then the minimum of $f_{\omega}$. Therefore the choice $\delta = \omega^{-\frac{1}{\rho}}$ seems to be optimal regarding the decay rate. However we don't choose exactly the minimum of $f_{\omega}$ for simplicity. Hence the constant $C(U,\psi)$ can not be optimal.
	\end{enumerate}
\end{Details}

\begin{REM2} \label{REM2} \em
	As explained above, we do not furnish an optimal constant. Nevertheless it could be slightly improved in certain special cases, as for example in the case of regular amplitude, namely $\mu = 1$ with $U = \tilde{u}$. Indeed, the study of $I^{(1)}(\omega)$ is not necessary in this situation and the employed computations to establish \eqref{est1} are not needed, since we have
	\begin{equation*}
		\int_{p_1}^{p_0-\delta} \big| U'(p) \big| dp \leqslant \left\| \tilde{u}' \right\|_{L^1(p_1,p_2)} \qquad , \qquad \int_{p_0+\delta}^{p_2} \big| U'(p) \big| dp \leqslant \left\| \tilde{u}' \right\|_{L^1(p_1,p_2)} \; .
	\end{equation*}
	It follows that we can estimate $I^{(2)}(\omega)$ and $I^{(4)}(\omega)$ more precisely, namely,
	\begin{equation} \label{I2}
		\Big| I^{(j)}(\omega) \Big| \leqslant \left( 3 \left\| \tilde{u} \right\|_{L^{\infty}(p_1,p_2)} + \left\| \tilde{u}' \right\|_{L^1(p_1,p_2)} \right) m^{-1} \, \delta^{1 - \rho} \, \omega^{-1} \; ,
	\end{equation}
	with $j=2,4$, leading to
	\begin{equation*}
		C(U,\psi) := 2 \left\| \tilde{u} \right\|_{L^{\infty}(p_1,p_2)} + \left( 6 \left\| \tilde{u} \right\|_{L^{\infty}(p_1,p_2)} + 2 \left\| \tilde{u}' \right\|_{L^1(p_1,p_2)} \right) m^{-1} \; .
	\end{equation*}
	This refined constant will be employed several times in Section 2.
\end{REM2}

\vspace{0.4cm}

In the second result, we assume that the stationary point $p_0$ is outside the domain of integration $[p_1,p_2]$. In this case, the problem is that the derivative of the phase function does not vanish on the integration interval but can be arbitrarily close to $0$ if the stationary point is close to this interval. In order to obtain an estimate which is uniform with respect to the position of $p_0$, we apply the same frequency decomposition of the domain of integration as above. We obtain a uniform estimate with the same decay rate as obtained in the first result.

\begin{VDC3} \label{VDC3}
	Let $\rho > 1$, $\mu \in (0,1]$ and choose $p_0 \in I \backslash [p_1,p_2]$. Suppose that the functions $\psi : I \longrightarrow \R$ and $U : (p_1, p_2] \longrightarrow \C$ satisfy Assumption \emph{(P$_{p_0,\rho}$)} and Assumption \emph{(A$_{p_1,\mu}$)}, respectively. Moreover suppose that  $\psi'$ is monotone on $[p_1,p_2]$. Then
	\begin{equation*}
		\left| \int_{p_1}^{p_2} U(p) \, e^{i \omega \psi(p)} \, dp \right| \leqslant \tilde{C}(U,\psi) \, \omega^{-\frac{\mu}{\rho}} \; ,
	\end{equation*}
	for all $\omega > 0$, where the constant $\tilde{C}(U,\psi) > 0$ is given by
	\begin{equation*}
		\tilde{C}(U,\psi) := \frac{2}{\mu} \, \left\| \tilde{u} \right\|_{L^{\infty}(p_1,p_2)} + \left( 4 \left\| \tilde{u} \right\|_{L^{\infty}(p_1,p_2)} + \left\| \tilde{u}' \right\|_{L^1(p_1,p_2)} \right) \left(\min_{p \in [p_1,p_2]} \left| \tilde{\psi}(p) \right| \right)^{-1} \; .
	\end{equation*}
\end{VDC3}	

\begin{proof}
	 Let $\omega > 0$ and $\delta > 0$ sufficiently small. We split the integral,
	\begin{align*}
		\int_{p_1}^{p_2} U(p) \, e^{i \omega \psi(p)} \, dp	& = \int_{p_1}^{p_1 + \delta} \dots \quad + \int_{p_1 + \delta}^{p_2-\delta} \dots \quad + \int_{p_2-\delta}^{p_2} \dots \\
															& =: I^{(1)}(\omega) + I^{(2)}(\omega) + I^{(3)}(\omega) \; ,
	\end{align*}
	where $I^{(1)}(\omega)$ and $I^{(3)}(\omega)$ are bounded from above by $\displaystyle \frac{\left\| \tilde{u} \right\|_{L^{\infty}(p_1,p_2)}}{\mu} \, \delta^{\mu}$, using the smallness of the interval. The study of $I^{(2)}(\omega)$ is based on the employed method of the second point of the proof of Theorem \ref{VDC}, which provides
	\begin{equation*}
		\Big| I^{(2)}(\omega) \Big| \leqslant \left( 4 \left\| \tilde{u} \right\|_{L^{\infty}(p_1,p_2)} + \left\| \tilde{u}' \right\|_{L^1(p_1,p_2)} \right) m^{-1} \, \delta^{\mu - \rho} \, \omega^{-1} \; ;
	\end{equation*}
	we used the fact that
	\begin{equation*}
		\forall \, p \in [p_1 + \delta, p_2 - \delta] \qquad | U(p) | \leqslant \delta^{\mu-1} \| \tilde{u} \|_{L^{\infty}(p_1,p_2)} \; ,
	\end{equation*}
	and
	\begin{equation*}
		\forall \, p \in [p_1 + \delta, p_2 - \delta] \qquad \big| \psi'(p) \big| \geqslant \left\{ \begin{array}{rl}
			& \hspace{-3mm} (p_1 + \delta - p_0)^{\rho-1} \, m \geqslant \delta^{\rho-1} \, m \; , \quad \text{if } p_0 < p_1 \; , \\ [2mm]
			& \hspace{-3mm} (p_0 - p_2 + \delta)^{\rho-1} \, m \geqslant \delta^{\rho-1} \, m \; , \quad \text{if } p_0 > p_2 \; ,
	\end{array} \right.
	\end{equation*}
	with $\displaystyle m := \min_{p \in [p_1,p_2]} \left| \tilde{\psi} (p) \right|$. Finally we set $\delta = \omega^{-\frac{1}{\rho}}$ to conclude.
\end{proof}

\begin{VDC3bis} \label{VDC3bis} \em
	In the case of regular amplitude, one can use the refined the estimate \eqref{I2} of $I^{(2)}(\omega)$ provided in Remark \ref{REM2}. Hence the constant $\tilde{C}(U,\psi)$ becomes in this situation,
	\begin{equation*}
		\tilde{C}(U,\psi) := 2 \left\| \tilde{u} \right\|_{L^{\infty}(p_1,p_2)} + \left( 3 \left\| \tilde{u} \right\|_{L^{\infty}(p_1,p_2)} + \left\| \tilde{u}' \right\|_{L^1(p_1,p_2)} \right) \left(\min_{p \in [p_1,p_2]} \left| \tilde{\psi}(p) \right| \right)^{-1} \; .
	\end{equation*}
\end{VDC3bis}

\vspace{0.4cm}

We derive from the two previous theorems the following corollary, furnishing an estimate of the oscillatory integral which does not depend on the position of the stationary point.

\begin{COR1} \label{COR1}
	Let $\rho > 1$, $\mu \in (0,1]$ and choose $p_0 \in I$. Suppose that the functions $\psi : I \longrightarrow \R$ and $U : (p_1, p_2] \longrightarrow \C$ satisfy Assumption \emph{(P$_{p_0,\rho}$)} and Assumption \emph{(A$_{p_1,\mu}$)}, respectively. Moreover suppose that $\psi'$ is monotone on $I_{p_0}^-$ and $I_{p_0}^+$, where
	\begin{equation*}
		I_{p_0}^- := \left\{ p \in I \, \big| \, p \leqslant p_0 \right\} \qquad , \qquad I_{p_0}^+ := \left\{ p \in I \, \big| \, p \geqslant p_0 \right\} \; .
	\end{equation*}
	Then
	\begin{equation*}
		\left| \int_{p_1}^{p_2} U(p) \, e^{i \omega \psi(p)} \, dp \, \right| \leqslant C(U,\psi) \, \omega^{-\frac{\mu}{\rho}} \; ,
	\end{equation*}
	for all $\omega > 0$, where the constant $C(U,\psi) > 0$ is given by
	\begin{equation*}
		C(U,\psi) := \frac{3}{\mu} \, \left\| \tilde{u} \right\|_{L^{\infty}(p_1,p_2)} + \left( 8 \left\| \tilde{u} \right\|_{L^{\infty}(p_1,p_2)} + 2 \left\| \tilde{u}' \right\|_{L^1(p_1,p_2)} \right) \left(\min_{p \in [p_1,p_2]} \left| \tilde{\psi}(p) \right| \right)^{-1} \; .
	\end{equation*}
\end{COR1}

\begin{proof}
	The result is a direct consequence of Theorems \ref{VDC} and \ref{VDC3}. Let us distinguish two cases.
	\begin{itemize}
		\item \textit{Case $p_0 \in [p_1, p_2]$}. This corresponds to the setting of Theorem \ref{VDC}. So
		\begin{equation*}
			\left| \int_{p_1}^{p_2} U(p) \, e^{i \omega \psi(p)} \, dp \, \right| \leqslant C(U,\psi) \, \omega^{-\frac{\mu}{\rho}} \; ,
		\end{equation*}
		with
		\begin{equation*}
			C(U,\psi) = \frac{3}{\mu} \, \left\| \tilde{u} \right\|_{L^{\infty}(p_1,p_2)} + \left( 8 \left\| \tilde{u} \right\|_{L^{\infty}(p_1,p_2)} + 2 \left\| \tilde{u}' \right\|_{L^1(p_1,p_2)} \right) \left(\min_{p \in [p_1,p_2]} \left| \tilde{\psi}(p) \right| \right)^{-1} \; .
		\end{equation*}
		\item \textit{Case $p_0 \notin [p_1, p_2]$}. In this case, either $[p_1,p_2] \subset I_{p_0}^-$ or $[p_1,p_2] \subset I_{p_0}^+$. Since $\psi'$ is assumed monotone on both intervals $I_{p_0}^-$ and $I_{p_0}^+$, Theorem \ref{VDC3} is applicable and furnishes
		\begin{equation*}
			\left| \int_{p_1}^{p_2} U(p) \, e^{i \omega \psi(p)} \, dp \, \right| \leqslant \tilde{C}(U,\psi) \, \omega^{-\frac{\mu}{\rho}} \; ,
		\end{equation*}
		with $\displaystyle \tilde{C}(U,\psi) := \frac{2}{\mu} \, \left\| \tilde{u} \right\|_{L^{\infty}(p_1,p_2)} + \left( 4 \left\| \tilde{u} \right\|_{L^{\infty}(p_1,p_2)} + \left\| \tilde{u}' \right\|_{L^1(p_1,p_2)} \right) \left(\min_{p \in [p_1,p_2]} \left| \tilde{\psi}(p) \right| \right)^{-1} \; .$		
	\end{itemize}
	Now we note that $\tilde{C}(U,\psi) \leqslant C(U,\psi)$. So for any $p_0 \in I$, we have the desired uniform estimate, namely,
	\begin{equation*}
		\forall \, \omega > 0 \qquad \left| \int_{p_1}^{p_2} U(p) \, e^{i \omega \psi(p)} \, dp \, \right| \leqslant C(U,\psi) \, \omega^{-\frac{\mu}{\rho}} \; .
	\end{equation*}
\end{proof}

\begin{REM2bis} \label{REM2bis} \em
	As previously, we furnish a better constant in the case of regular amplitude:
	\begin{equation*}
		C(U,\psi) := 2 \left\| \tilde{u} \right\|_{L^{\infty}(p_1,p_2)} + \left( 6 \left\| \tilde{u} \right\|_{L^{\infty}(p_1,p_2)} + 2 \left\| \tilde{u}' \right\|_{L^1(p_1,p_2)} \right) \left(\min_{p \in [p_1,p_2]} \left| \tilde{\psi}(p) \right| \right)^{-1} \; .
	\end{equation*}
\end{REM2bis}

\vspace{0.4cm}

In the last theorem of this section, we give up the uniformity requirement in favour of an improved decay rate. To do so, we consider the weaker hypothesis that the derivative of the phase function is non-zero inside $[p_1,p_2]$. This necessitates to employ the quantity $\displaystyle \min_{[p_1,p_2]} |\psi'|$ as a lower-bound of the derivative of the phase, leading to an improved decay but which is uniform only if $\psi$ has no stationary points at all. Otherwise the constant depends intrinsically on the distance between the stationary point $p_0$ and $[p_1,p_2]$: the more the stationary point approaches the integration interval, the larger the constant is. Finally let us remark that this new estimate will be helpful to establish some results of the next section.

\begin{VDC4} \label{VDC4}
	Let $\mu \in (0,1]$. Suppose that the function $U : (p_1,p_2] \longrightarrow \C$ satisfies Assumption \emph{(A$_{p_1,\mu}$)}. Moreover suppose that $\psi \in \mathcal{C}^2(I)$ such that $\psi'$ does not vanish and is monotone on $[p_1,p_2]$. Then
	\begin{equation*}
		\left| \int_{p_1}^{p_2} U(p) \, e^{i \omega \psi(p)} \, dp \right| \leqslant C^c(U,\psi) \, \omega^{-\mu} \; ,
	\end{equation*}
	for all $\omega > 0$, where the constant $C^c(U,\psi) > 0$ is given by
	\begin{equation*}
		C^c(U,\psi) := \frac{1}{\mu} \, \left\| \tilde{u} \right\|_{L^{\infty}(p_1,p_2)} + \left( 4 \left\| \tilde{u} \right\|_{L^{\infty}(p_1,p_2)} + \left\| \tilde{u}' \right\|_{L^1(p_1,p_2)} \right) \left( \min_{p \in [p_1,p_2]} \big| \psi'(p) \big| \right)^{-1} \; .
	\end{equation*}
\end{VDC4}

\begin{proof}
	Let $\omega > 0$, $\delta > 0$ sufficiently small and split the integral as follows,
	\begin{align*}
		\int_{p_1}^{p_2} U(p) \, e^{i \omega \psi(p)} \, dp	& = \int_{p_1}^{p_1 + \delta} \dots \quad + \int_{p_1 + \delta}^{p_2} \dots \\
															& =: I^{(1)}(\omega) + I^{(2)}(\omega) \; .
	\end{align*}
	The integral $I^{(1)}(\omega)$ is bounded by $\displaystyle \frac{\left\| \tilde{u} \right\|_{L^{\infty}(p_1,p_2)}}{\mu} \, \delta^{\mu}$. Then we use the method of the second point of the proof of Theorem \ref{VDC} to obtain an estimate of $I^{(2)}(\omega)$, since $\psi'$ does not vanish on $[p_1,p_2]$. But here, we bound $|\psi'|$ from below by $\displaystyle \min_{p \in [p_1,p_2]} | \psi'(p) | =: m > 0$, leading to
	\begin{equation*}
		\Big| I^{(2)}(\omega) \Big| \leqslant \left( 4 \left\| \tilde{u} \right\|_{L^{\infty}(p_1,p_2)} + \left\| \tilde{u}' \right\|_{L^1(p_1,p_2)} \right) m^{-1} \, \delta^{\mu - 1} \omega^{-1} \; ,
	\end{equation*}
	We put $\delta = \omega^{-1}$ to finish.
\end{proof}

\begin{VDC4bis} \label{VDC4bis} \em
	\begin{enumerate}
		\item Theorem \ref{VDC4} together with Theorem \ref{VDC3} show that getting a better decay rate balances out with a loss of uniformity of the constant. Actually on the one hand, the decay rate $\omega^{-\frac{\mu}{\rho}}$ is slower than $\omega^{-\mu}$; on the other hand, $\tilde{C}(U,\psi)$ (see Theorem \ref{VDC3}) is a constant which does not depend on $p_0$, whereas $C^c(U,\psi)$ may depend on the stationary point if it exists.
		\item Let us furnish a refinement of the constant $C^c(U,\psi)$ in the case of regular amplitude. Here we do not need to consider the integral $I^{(1)}(\omega)$ and according to Remark \ref{REM2}, the estimate of $I^{(2)}(\omega)$ is improvable. Then we obtain
		\begin{equation*}
			C^c(U,\psi) := \left( 3 \left\| \tilde{u} \right\|_{L^{\infty}(p_1,p_2)} + \left\| \tilde{u}' \right\|_{L^1(p_1,p_2)} \right) \left( \min_{p \in [p_1,p_2]} \big| \psi'(p) \big| \right)^{-1} \; .
		\end{equation*}
	\end{enumerate}
\end{VDC4bis}

\section{Applications to a class of dispersive equations: slow decays and concentration phenomena}

\hspace{0.5cm} In this second section, we are interested in the time asymptotic behaviour of solutions of a certain class of evolution equations on the line whose generators are Fourier multipliers. The second derivative of the symbol defining the operator is supposed to be positive and we consider initial conditions having a Fourier transform which is singular at the frequency $p_1$.

The first aim of this section is to complete in a more general setting the results of \cite{article1} and \cite{article2}. We furnish uniform estimates of the solution in arbitrary cones as well as estimates of the $L^{\infty}$-norm for $t \geqslant 1$. To do so, we write the solution as an oscillatory integral with respect to time, permitting to apply the results of Section 1. In particular, we provide the optimal $L^{\infty}$-time decay rate of the solution of the free Schrödinger equation on the line with initial data in compact frequency bands and having singular frequencies.

Then we show that the symbol of the Fourier multiplier may influence the dispersion of the solution: outside a certain cone depending only on the symbol, the decay rate of the solution is better than inside, leading to the idea that the solution tends to be concentrated in this cone.\\

Let $f : \R \longrightarrow \C$ be a function belonging to $\mathcal{C}^{\infty}(\R)$ such that all derivatives grow at most as a polynomial at infinity. We can associate with such a \textit{symbol} $f$ an operator $f(D) : \mathcal{S}(\R) \longrightarrow \mathcal{S}(\R)$ defined by
\begin{equation*}
	\forall \, x \in \R \qquad f(D) u(x) := \frac{1}{2 \pi} \int_{\R} f(p) \, \tf u(p) \, e^{i x p} \, dp = \tf^{-1} \Big( f \, \tf u \Big)(x) \; ,
\end{equation*}
where $\tf u$ is the Fourier transform of $u \in \mathcal{S}(\R)$, namely $\displaystyle \tf u(p) = \int_{\R} u(x) \, e^{- i x p} \, dx$. Since all the derivatives of the symbol $f$ grow at most as a polynomial at infinity, $f(D)$ can be extended to a map from the tempered distributions $\mathcal{S}'(\R)$ to itself.  The operator $f(D) : \mathcal{S}'(\R) \longrightarrow \mathcal{S}'(\R)$ is called a \textit{Fourier multiplier}.\\
Secondly, for such an operator, we can introduce the following evolution equation on the line,
\begin{equation*}
	\left\{ \begin{array}{rl}
			& \hspace{-2mm} \left[ i \, \partial_t - f \big(D) \right] u(t,x) = 0 \\ [2mm]
			& \hspace{-2mm} u(0,x) = u_0(x)
	\end{array} \right. \; ,
\end{equation*}
for $t > 0$ and $x \in \R$. Supposing $u_0 \in \mathcal{S}'(\R)$, this initial value problem has a unique solution in $\displaystyle \mathcal{C}^{\infty}\big( \R_+ , \mathcal{S}'(\R) \big)$, formally given by the following solution formula,
\begin{equation} \label{formula}
	u(t,x) = \frac{1}{2 \pi} \int_{\R} \tf u_0(p) \, e^{-i t f(p) + i x p} \, dp = \tf^{-1} \Big( e^{-i t f} \tf u_0 \Big)(x) \; .
\end{equation}
Throughout this section, we shall suppose that $f'' > 0$. This hypothesis implies that the phase function which appears in the solution formula has at most one stationary point of order $1$. It is possible to consider symbols leading to phases having several stationary points; the results would be more complicated but the nature of the phenomena would be unchanged. Further one might suppose that $f''$ vanishes at several points; in this case, the order of the stationary points may be larger than $1$ and hence the methods employed below have to be adapted. \\

We shall need the following definition of a space-time cone.

\begin{DEF1}
	Let $a < b$ be two real numbers (eventually infinite). We define the space-time cone $\mathfrak{C}(a, b)$ as follows:
	\begin{equation*}
		\mathfrak{C}(a, b) := \left\{ (t,x) \in (0,+\infty) \times \R \, \Big| \, a \leqslant \frac{x}{t} \leqslant b \right\} \; .
	\end{equation*}
	Furthermore the outside $\mathfrak{C}(a, b)^c$ of the cone $\mathfrak{C}(a, b)$ is given by
	\begin{equation*}
		\mathfrak{C}(a, b)^c := \big( (0,+\infty) \times \R \big) \backslash \mathfrak{C}(a, b) \; .
	\end{equation*}
\end{DEF1}

\subsection*{Frequency band and singular frequency: the influence of the initial data}

\hspace{0.5cm} In this first subsection, we study the influence of an initial data in a compact frequency band or having a singular frequency on the asymptotic behaviour of the solution.\\

In this first result, we consider an initial data in a compact frequency band $[p_1,p_2]$ where $p_1$ is a singular frequency. Hence the solution formula can be written as an oscillatory integral as in \eqref{oscillatory}. Depending on the value of the quotient $\frac{x}{t}$, the phase has a stationary point which is either in a neighbourhood of the integration interval or far from this interval. Roughly speaking, this leads to study the solution inside the space-time cone $\mathfrak{C}\big(f'(p_1),f'(p_2)\big)$ generated by the frequency band and outside. Applying the results of the first section, we obtain two estimates: the decay is slower inside this cone and is globally affected by the singular frequency $p_1$. The results are in accordance with the more precise results in \cite{article1}, which have been obtained under stronger conditions.\\

\noindent The following condition contains the assumptions of the initial data that we shall make in the first result.\\

\noindent \textbf{Condition ($\mathbf{C_{[p_1,p_2],\mu}}$).} Fix $\mu \in (0,1]$ and let $p_1, p_2$ be two finite real numbers such that $p_1 < p_2$. \\
	A tempered distribution $u_0$ satisfies Condition (C$_{[p_1,p_2],\mu}$) if and only if $\text{supp} \, \tf u_0 = [p_1,p_2]$ and $\tf u_0$ verifies Assumption (A$_{p_1,\mu}$) on $[p_1,p_2]$.

\begin{REM8}	
	\emph{It is interesting to note that $\tf u_0$ is actually an integrable function under this condition. So $\tf u_0$ belongs to $\mathcal{S}'(\R)$ and hence an initial data satisfying Condition (C$_{[p_1,p_2],\mu}$) exists as a tempered distribution. Moreover thanks to the integrability of $\tf u_0$, the solution formula \eqref{formula} is well-defined for all $x \in \R$ and $t > 0$.}
\end{REM8}

\begin{SCHRO1} \label{SCHRO1}
	 Suppose that $u_0$ satisfies Condition \emph{(C$_{[p_1,p_2],\mu}$)} and choose two finite real numbers $\tilde{p}_1, \tilde{p}_2$ such that $\displaystyle [p_1,p_2] \subsetneq [\tilde{p}_1, \tilde{p}_2] =: \tilde{I} $. Then
		\begin{equation*}
			\forall \, (t,x) \in \mathfrak{C}\big(f'(\tilde{p}_1),f'(\tilde{p}_2)\big) \qquad \big| u(t,x) \big| \leqslant c(u_0,f) \, t^{- \frac{\mu}{2}} \; ,
		\end{equation*}
		where the constant $c(u_0,f) > 0$ is given by \eqref{c_I}. Moreover
		\begin{equation*}
			\forall \, (t,x) \in \mathfrak{C}\big(f'(\tilde{p}_1),f'(\tilde{p}_2)\big)^c \qquad \big| u(t,x) \big| \leqslant c_{\tilde{I}}^c(u_0,f) \, t^{- \mu} \; ,
		\end{equation*}
		where the constant $c_{\tilde{I}}^c(u_0,f) > 0$ is given by \eqref{c_I^c}.
\end{SCHRO1}

\begin{proof}
	We consider the solution formula given by \eqref{formula} and we factorize the phase function $p \longmapsto xp - tf(p)$ by $t$, which gives
	\begin{equation*}
		\forall \, (t,x) \in (0,+\infty) \times \R \qquad u(t,x) = \int_{p_1}^{p_2} U(p) \, e^{i t \psi(p)} \, dp \; ,
	\end{equation*}
	where
	\begin{equation*}
		\left\{ \begin{array}{rl}
				& \displaystyle \forall \, p \in (p_1,p_2] \qquad U(p) := \frac{1}{2\pi}  \, \tf u_0(p) = \frac{1}{2\pi}  \, (p-p_1)^{\mu-1} \tilde{u}(p) \; , \\
				& \vspace{-0.3cm} \\
				& \displaystyle \forall \, p \in \R \qquad \psi(p) := \frac{x}{t} \, p - f(p) \; .
		\end{array} \right.
	\end{equation*}
	It is clear that $U$ verifies Assumption (A$_{p_1,\mu}$) on $[p_1,p_2]$. Moreover, we note that
	\begin{equation*}
		\psi'(p) = \frac{x}{t} - f'(p) \; .
	\end{equation*}
	But $f'' > 0$ on $\R$, so $f' : \R \longrightarrow f(\R)$ is a bijection. This implies the uniqueness of the stationary point if it exists. Now let us distinguish two cases.
	\begin{enumerate}
		\item \textit{Case $\frac{x}{t} \in f'\big(\tilde{I}\big)$.} In this case, the stationary point $p_0$ exists, belongs to $\tilde{I} := [\tilde{p}_1, \tilde{p}_2]$ and is defined by
		\begin{equation*}
			p_0 := \big(f'\big)^{-1} \left(\frac{x}{t} \right) \; .
		\end{equation*}
		Moreover $\psi''(p) = - f''(p) < 0$ which implies $\psi''(p_0) \neq 0$. According to Examples \ref{EXPLE} i), $\psi : \R \longrightarrow \R$ satisfies Assumption (P$_{p_0,2}$) with
		\begin{equation*}
			\tilde{\psi}(p) = \left\{ \begin{array}{rl}
				& \displaystyle \frac{p-p_0}{|p-p_0|} \int_0^1 -f''\big(y(p-p_0) + p_0\big) \, dy \; , \quad \text{if} \; p \neq p_0 \; , \\
				& \vspace{-0.3cm} \\
				& \displaystyle -f''(p_0) \; , \quad \text{if} \; p = p_0 \; ,
			\end{array} \right.
		\end{equation*}
		and $\big| \tilde{\psi}(p) \big| \geqslant m > 0$ for all $p \in [p_1,p_2]$, where $\displaystyle m := \min_{p \in [p_1,p_2]} f''(p) > 0$. Furthermore $\psi'$ is monotone on $\R$ since $\psi'' = - f'' < 0$. So we can apply Corollary \ref{COR1} with $\rho = 2$, which gives for all $(t,x) \in \mathfrak{C}\big(f'(\tilde{p}_1),f'(\tilde{p}_2)\big)$,
		\begin{equation*}
			\big| u(t,x) \big| = \left| \int_{p_1}^{p_2} U(p) e^{it \psi(p)} \, dp \right| \leqslant c(u_0,f) \, t^{-\frac{\mu}{2}} \; ,
		\end{equation*}
		where
		\begin{equation} \label{c_I}
			c(u_0,f) := \frac{1}{2\pi} \, \frac{3}{\mu} \, \left\| \tilde{u} \right\|_{L^{\infty}(p_1,p_2)} + \frac{1}{\pi} \left( 4 \left\| \tilde{u} \right\|_{L^{\infty}(p_1,p_2)} + \left\| \tilde{u}' \right\|_{L^1(p_1,p_2)} \right) m^{-1} \; .
		\end{equation}
		\item \textit{Case $\frac{x}{t} \notin f'\big(\tilde{I}\big)$.} Firstly, let us suppose $\frac{x}{t} > f'(\tilde{p}_2)$. Here there is no stationary point in the integration interval. More precisely, it is possible to bound $\psi'$ from below by a non-zero constant, that is to say,
		\begin{equation*}
			\forall \, p \in [p_1,p_2] \qquad \psi'(p) = \frac{x}{t} - f'(p) \geqslant f'(\tilde{p}_2) - f'(p_2) =: m_{\tilde{p}_2} > 0 \; ,
		\end{equation*}
		and $\psi'$ is still monotone. According to Theorem \ref{VDC4}, we obtain the following estimate of the solution,
		\begin{equation*}
			\forall \, t > 0 \qquad \forall \, x > f'(\tilde{p}_2) \, t \qquad \big| u(t,x) \big| \leqslant c_{x/t > f'(\tilde{p}_2)}^c(u_0,f) \, t^{-\mu} \; ,
		\end{equation*}
		with $\displaystyle c_{x/t > f'(\tilde{p}_2)}^c(u_0,f) := \frac{1}{2 \pi} \, \frac{1}{\mu} \, \left\| \tilde{u} \right\|_{L^{\infty}(p_1,p_2)} + \frac{1}{2 \pi} \left( 4 \left\| \tilde{u} \right\|_{L^{\infty}(p_1,p_2)} + \left\| \tilde{u}' \right\|_{L^1(p_1,p_2)} \right) m_{\tilde{p}_2}^{-1}$.\\ In the other case $\frac{x}{t} < f'(\tilde{p}_1)$, similar arguments furnish
		\begin{equation*}
			\forall \, t > 0 \qquad \forall \, x < f'(\tilde{p}_1) \, t \qquad \big| u(t,x) \big| \leqslant c_{x/t < f'(\tilde{p}_1)}^c(u_0,f) \, t^{-\mu} \; ,
		\end{equation*}
		with $\displaystyle c_{x/t < f'(\tilde{p}_2)}^c(u_0,f) := \frac{1}{2 \pi} \, \frac{1}{\mu} \, \left\| \tilde{u} \right\|_{L^{\infty}(p_1,p_2)} + \frac{1}{2 \pi} \left( 4 \left\| \tilde{u} \right\|_{L^{\infty}(p_1,p_2)} + \left\| \tilde{u}' \right\|_{L^1(p_1,p_2)} \right) m_{\tilde{p}_1}^{-1}$, where we set $m_{\tilde{p}_1} := f'(p_1) - f'(\tilde{p}_1) > 0$.\\ So we can finally write
		\begin{equation*}
			\forall \, (t,x) \in \mathfrak{C}\big(f'(\tilde{p}_1), f'(\tilde{p}_2)\big)^c \qquad \big| u(t,x) \big| \leqslant c_{\tilde{I}}^c(u_0,f) \, t^{-\mu} \; ,
		\end{equation*}
		where
		\begin{equation} \label{c_I^c}
			c_{\tilde{I}}^c(u_0,f) := c_{x/t > f'(\tilde{p}_2)}^c(u_0,f) + c_{x/t < f'(\tilde{p}_1)}^c(u_0,f) \; .
		\end{equation}
	\end{enumerate}
\end{proof}

\vspace{0.4cm}

An $L^{\infty}$-norm estimate can be easily derived from the preceding result.

\begin{SCHRO2} \label{SCHRO2}
	Suppose that $u_0$ satisfies Condition (C$_{[p_1,p_2],\mu}$) and choose two finite real numbers $\tilde{p}_1, \tilde{p}_2$ such that $[p_1,p_2] \subsetneq [\tilde{p}_1, \tilde{p}_2] =: \tilde{I} $. Then
		\begin{equation*}
			\forall \, t > 0 \qquad \big\| u(t,.) \big\|_{L^{\infty}(\R)} \leqslant c(u_0,f) \, t^{- \frac{\mu}{2}} + c_{\tilde{I}}^c(u_0,f) \, t^{- \mu} \; ,
		\end{equation*}
		where the constants $c(u_0,f) > 0$ and $c_{\tilde{I}}^c(u_0,f) > 0$ are given by \eqref{c_I} and \eqref{c_I^c} respectively. In particular, we have
		\begin{equation*}
			\forall \, t \geqslant 1 \qquad \big\| u(t,.) \big\|_{L^{\infty}(\R)} \leqslant \big( c(u_0,f) + c_{\tilde{I}}^c(u_0,f) \big) \, t^{- \frac{\mu}{2}} \; .
		\end{equation*}
\end{SCHRO2}

\begin{proof}
	Simple consequence of Theorem \ref{SCHRO1}.
\end{proof}

\vspace{0.4cm}

Thanks to a combination of the preceding result and of \cite{article1}, one can prove the optimality of the last $L^{\infty}$-norm estimate in the case of the free Schrödinger equation on the line.

\begin{OPTI}
	Let $u_S : \R_+^* \times \R \longrightarrow \C$ be the solution of
	\begin{equation*}
	\left\{ \begin{array}{rl}
			& \hspace{-2mm} \left[ i \, \partial_t - \partial_x^2 \right] u(t,x) = 0 \\ [2mm]
			& \hspace{-2mm} u(0,x) = u_0(x)
	\end{array} \right. \; ,
\end{equation*}
	for all $t > 0$ and $x \in \R$, where $u_0$ satisfies Condition (C$_{[p_1,p_2],\mu}$). Then
	\begin{equation} \label{optischro}
		\forall \, t \geqslant 1 \qquad \big\| u_S(t,.) \big\|_{L^{\infty}(\R)} \leqslant c(u_0) \, t^{- \frac{\mu}{2}} \; ,
	\end{equation}
	where the constant $c(u_0) > 0$ is given by Corollary \ref{SCHRO2}, and the decay rate is optimal.
\end{OPTI}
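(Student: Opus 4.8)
\emph{Proof idea.} The estimate \eqref{optischro} is exactly Corollary \ref{SCHRO2} specialised to $f(p) = p^2$: indeed $f'' \equiv 2 > 0$, so the hypotheses of Theorem \ref{SCHRO1} and of its corollary hold, and one may take for instance $\tilde I = [p_1 - 1, p_2 + 1]$, which yields $\big\| u_S(t,\cdot) \big\|_{L^\infty(\R)} \leqslant c(u_0)\, t^{-\mu/2}$ for $t \geqslant 1$. Hence the whole content of the statement is the \emph{sharpness} of the exponent $\tfrac\mu2$, that is, the fact that $t^{\mu/2}\,\big\| u_S(t,\cdot) \big\|_{L^\infty(\R)}$ does not tend to $0$ as $t \to +\infty$. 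The plan is to exhibit one space-time ray along which $|u_S|$ decays \emph{exactly} at the rate $t^{-\mu/2}$, and for this I would invoke the asymptotic expansion of the free Schrödinger solution established in \cite{article1} (and \cite{article2}).

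I would first look at the ray $x_t := f'(p_1)\, t = 2 p_1 t$, the space-time direction attached to the singular frequency $p_1$; it lies in the interior of the cone $\mathfrak C\big(f'(\tilde p_1), f'(\tilde p_2)\big)$, so the upper bound above applies at $(t, x_t)$. Completing the square in the phase $\psi(p) = \tfrac{x_t}{t}\,p - p^2 = -(p-p_1)^2 + p_1^2$ and rescaling $q = \sqrt t\,(p-p_1)$ gives
\[
	u_S(t, x_t) = \frac{e^{i t p_1^2}}{2\pi}\, t^{-\frac\mu2} \int_0^{\sqrt t\,(p_2 - p_1)} q^{\,\mu-1}\, \tilde u\!\left(p_1 + \frac{q}{\sqrt t}\right) e^{-i q^2}\, dq \; .
\]
The asymptotic analysis of \cite{article1} makes rigorous the fact that, as $t \to +\infty$, this integral converges to $\tilde u(p_1)\, K_\mu$, where $K_\mu := \int_0^{+\infty} q^{\,\mu-1} e^{-iq^2}\, dq = \tfrac12\,\Gamma\!\big(\tfrac\mu2\big)\, e^{-i\pi\mu/4}$ is a convergent improper integral for $\mu \in (0,1]$ which never vanishes. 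Since $\tilde u(p_1) \neq 0$ when $\mu \neq 1$ by Assumption \emph{(A$_{p_1,\mu}$)}, there exist $c' > 0$ and $T_0 > 0$ such that $\big| u_S(t, x_t) \big| \geqslant c'\, t^{-\mu/2}$ for all $t \geqslant T_0$. In the remaining case $\mu = 1$ the amplitude is regular, $\tilde u(p_1) = 0$, and one instead evaluates along a ray $x = 2 p_0 t$ with $p_0 \in (p_1, p_2)$ chosen so that $\tilde u(p_0) \neq 0$ (such a $p_0$ exists because $\text{supp}\,\tf u_0 = [p_1,p_2]$), where the classical stationary phase expansion of \cite{article1} gives $u_S(t, 2 p_0 t) = c_{p_0}\,\tilde u(p_0)\, t^{-1/2} + o(t^{-1/2})$ with $c_{p_0} \neq 0$, hence again a lower bound of order $t^{-1/2} = t^{-\mu/2}$.

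Combining the two estimates, for all $t$ large enough,
\[
	c'\, t^{-\frac\mu2} \leqslant \big| u_S(t, x_t) \big| \leqslant \big\| u_S(t,\cdot) \big\|_{L^\infty(\R)} \leqslant c(u_0)\, t^{-\frac\mu2} \; ,
\]
so $t^{\mu/2}\,\big\| u_S(t,\cdot) \big\|_{L^\infty(\R)}$ stays bounded away from $0$ and $\infty$; in particular no estimate $\big\| u_S(t,\cdot) \big\|_{L^\infty(\R)} \leqslant C\, t^{-\mu/2 - \varepsilon}$ can hold, i.e. the rate $t^{-\mu/2}$ in \eqref{optischro} is optimal. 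The main obstacle is the passage to the limit under the integral in the second step: the integrand $q^{\mu-1} e^{-iq^2}$ is only conditionally integrable at infinity, so one must show that the error is genuinely $o(t^{-\mu/2})$ uniformly — precisely the interaction between the algebraic singularity $(p-p_1)^{\mu-1}$ and the quadratic stationary point that the adapted Erdélyi method of \cite{article1} is designed to handle, exploiting the continuity of $\tilde u$ together with $\tilde u' \in L^1$. Verifying that the ray $x_t = 2 p_1 t$ falls within the region covered by \cite{article1} (it is the "singular direction" treated there) and the bookkeeping for the case $\mu = 1$ are comparatively minor points.
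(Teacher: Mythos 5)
Your proposal is correct and follows essentially the same route as the paper: the upper bound is Corollary \ref{SCHRO2} with $f(p)=p^2$, and the matching lower bound comes from the one-term asymptotic expansion of \cite{article1} along the singular direction $x = 2p_1 t$. The only notable difference is that the paper invokes Theorem 2.6 of \cite{article1} under the additional hypotheses $\tilde u \in \mathcal{C}^1([p_1,p_2])$ and $\tilde u(p_2)=0$ (and does not separate the case $\mu=1$, where the remainder $O(t^{-1/2})$ is of the same order as the leading term), whereas you make the limiting constant $\tilde u(p_1)\,\tfrac12\Gamma(\tfrac\mu2)e^{-i\pi\mu/4}$ explicit and treat $\mu=1$ separately.
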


\begin{proof}
	Corollary \ref{SCHRO2} applied to the case $f(p) = p^2$ furnishes \eqref{optischro}. Then by supposing that $\tilde{u} \in \mathcal{C}^1\big( [p_1,p_2] \big)$ and that $\tilde{u}(p_2) = 0$, Theorem 2.6 of \cite{article1} is applicable and provides an estimate to one term of the solution on the space-time direction direction $x = 2 p_1 \, t$, namely,
	\begin{equation*}
		\forall \, t \geqslant 1 \qquad \left| u_S(t,x) - L_{\mu}(t,u_0) \, t^{-\frac{\mu}{2}} \right| \leqslant \tilde{c}(u_0) \, t^{-\frac{1}{2}} \; ;
	\end{equation*}
	the coefficient $L_{\mu}(t,u_0) \in \C$ is uniformly bounded with respect to time and the constant $\tilde{c}(u_0) > 0$ does not depend on $t$. This shows that the solution behaves like $L_{\mu}(t,u_0) \, t^{-\frac{\mu}{2}}$ on the above mentioned direction when $t$ tends to infinity, implying the optimality of the decay rate.
\end{proof}

\vspace{0.4cm}

In the following result, we furnish estimates of the solution in arbitrary narrow cones containing the direction $\frac{x}{t} = f'(p_1)$. In such regions, the phase has a stationary point which is in a neighbourhood of the singularity $p_1$. It is then expected that these two particular points interact with each other to produce the slow decay $t^{-\frac{\mu}{2}}$. The other frequencies are regular and so they do not contribute to this slow decay rate.\\
Here we can remove the frequency band condition: we consider an initial data having a Fourier transform supposed integrable on $\R$, combined with some technical hypotheses. Note that the following condition is inspired from Assumption (A$_{p_1,\mu}$). \\

\noindent \textbf{Condition ($\mathbf{C_{p_1, \mu}}$).} Fix $\mu \in (0,1]$ and choose a finite real number $p_1$.\\ A tempered distribution $u_0$ satisfies Condition (C$_{p_1, \mu}$) if and only if $\tf u_0 \in L^1(\R)$ and there exists a bounded differentiable function $\tilde{u} : \R \longrightarrow \C$ such that $\tilde{u}(p_1) \neq 0$ if $\mu \neq 1$, $\tilde{u}' \in L^1(\R)$ and
\begin{equation*}
	\forall \, p \in \R \backslash \{ p_1 \} \qquad \tf u_0(p) = |p-p_1|^{\mu-1} \tilde{u}(p) \; .
\end{equation*}

\begin{REM8}
	\emph{The integrability of $\tf u_0$ assures that an initial condition $u_0$ satisfying the above condition exists as a tempered distribution. Moreover the solution formula \eqref{formula} is still well-defined for $t > 0$ and $x \in \R$.}
\end{REM8}

\begin{PREGEO1} \label{PREGEO1}
	Suppose that $u_0$ satisfies Condition \emph{(C$_{p_1, \mu}$)} and choose two finite real numbers $\eta > \varepsilon > 0$. Then for all $\displaystyle (t,x) \in \mathfrak{C} \big( f'(p_1 - \varepsilon), f'(p_1 + \varepsilon) \big)$, we have
		\begin{equation*}
			\big| u(t,x) \big| \leqslant c_{\eta}^{(1)}(u_0,f) \, t^{-\frac{\mu}{2}} + c_{\eta, \varepsilon}^{(2)}(u_0,f) \, t^{-1} \; .
		\end{equation*}
		The constants $c_{\eta}^{(1)}(u_0,f)$ and $c_{\eta, \varepsilon}^{(2)}(u_0,f)$ are given by \eqref{c_1} and \eqref{c_2} respectively.
\end{PREGEO1}

\begin{proof}
	We shall employ the rewritting of the solution given in the proof of Theorem \ref{SCHRO1}, i.e.
	\begin{equation*}
		\forall \, (t,x) \in (0,+\infty) \times \R \qquad u(t,x) = \int_{\R} U(p) \, e^{i t \psi(p)} \, dp \; ,
	\end{equation*}
	where
	\begin{equation*}
		\left\{ \begin{array}{rl}
				& \displaystyle \forall \, p \in \R \backslash \{ p_1 \} \qquad U(p) := \frac{1}{2\pi}  \, \tf u_0(p) = \frac{1}{2\pi}  \, |p-p_1|^{\mu-1} \tilde{u}(p) \; , \\
				& \vspace{-0.3cm} \\
				& \displaystyle \forall \, p \in \R \qquad \psi(p) := \frac{x}{t} \, p - f(p) \; .
		\end{array} \right.
	\end{equation*}
	Let $\eta > \varepsilon > 0$ and split the integral as follows,
	\begin{align*}
		\int_{\R} U(p) \, e^{i t \psi(p)} \, dp	& = \int_{p_1- \eta}^{p_1 + \eta} \dots \; + \int_{\R \backslash [p_1 - \eta, p_1 + \eta]} \dots \\[3mm]
												& =: I^{(1)}(t,x,\eta) + I^{(2)}(t,x,\eta) \; .
	\end{align*}
	Firstly we study $I^{(1)}(t,x,\eta)$. We recall that
	\begin{equation*}
		\psi'(p) = \frac{x}{t} - f'(p) \; ;
	\end{equation*}
	since $\displaystyle \frac{x}{t}$ is supposed to belong to $\displaystyle \big[ f'(p_1 - \varepsilon), f'(p_1 + \varepsilon) \big]$, then $\psi$ has a stationary point which belongs to $[p_1-\varepsilon, p_1 + \varepsilon] \subset [p_1-\eta, p_1 + \eta]$. Following the arguments of the point i) of the proof of Theorem \ref{SCHRO1}, we apply Theorem \ref{VDC} on $[p_1 - \eta, p_1]$ and on $[p_1, p_1 + \eta]$ with $\rho = 2$, leading to
	\begin{equation*}
		\Big| I^{(1)}(t,x,\eta) \Big| \leqslant \left| \int_{p_1-\eta}^{p_1} \dots \right| + \left| \int_{p_1}^{p_1 + \eta} \dots \right| \leqslant \left( c_{1,\eta}^{(1)}(u_0,f) + c_{2,\eta}^{(1)}(u_0,f) \right) t^{-\frac{\mu}{2}} \; ,
	\end{equation*}
	where
	\begin{align*}
		& \bullet \quad c_{1,\eta}^{(1)}(u_0,f) := \frac{1}{2 \pi} \, \frac{3}{\mu} \, \left\| \tilde{u} \right\|_{L^{\infty}(p_1-\eta, p_1)} + \frac{1}{\pi} \left( 4 \left\| \tilde{u} \right\|_{L^{\infty}(p_1-\eta, p_1)} + \left\| \tilde{u}' \right\|_{L^1(p_1-\eta, p_1)} \right) m_{1,\eta}^{-1} \; , \\
		& \bullet \quad c_{2,\eta}^{(1)}(u_0,f) := \frac{1}{2 \pi} \, \frac{3}{\mu} \, \left\| \tilde{u} \right\|_{L^{\infty}(p_1, p_1 + \eta)} + \frac{1}{\pi} \left( 4 \left\| \tilde{u} \right\|_{L^{\infty}(p_1, p_1 + \eta)} + \left\| \tilde{u}' \right\|_{L^1(p_1, p_1 + \eta)} \right) m_{2,\eta}^{-1} \; ,
	\end{align*}
	with $\displaystyle m_{1, \eta} := \min_{p \in [p_1-\eta, p_1]} f''(p) > 0$ and $\displaystyle m_{2, \eta} := \min_{p \in [p_1, p_1 + \eta]} f''(p) > 0$. So we define the constant $c_{\eta}^{(1)}(u_0, f)$ by
	\begin{equation} \label{c_1}
		c_{\eta}^{(1)}(u_0, f) := c_{1,\eta}^{(1)}(u_0,f) + c_{2,\eta}^{(1)}(u_0,f) \; .
	\end{equation}
	Let us study $I^{(2)}(t,x,\eta)$. Let $k \in \N$ and consider the following sequence,
	\begin{equation*}
		\tilde{I}_k^{(2)}(t,x, \eta) := \int_{p_1+\eta}^{p_1+ \eta + k} U(p) \, e^{it \psi(p)} \, dp \; .
	\end{equation*}
	Since $\displaystyle \frac{x}{t} \in \big[ f'(p_1 - \varepsilon) , f'(p_1 + \varepsilon)  \big]$, we note that the first derivative of the phase function does not vanish on $[p_1 + \eta, p_1 + \eta + k]$ and more precisely, we have for any $k \in \N$,
	\begin{equation*}
		\forall \, p \in [p_1 + \eta, p_1 + \eta + k] \quad \big| \psi'(p) \big| = f'(p) - \frac{x}{t} \geqslant f'(p_1 + \eta) - f'(p_1 + \varepsilon) =: \tilde{m}_{1, \eta, \varepsilon} > 0  \; .
	\end{equation*}
	Theorem \ref{VDC4} in the case $\mu = 1$ furnishes for all $(t,x) \in \mathfrak{C}\big( f'(p_1 - \varepsilon), f'(p_1 + \varepsilon) \big)$,
	\begin{equation*}
		\left| \tilde{I}_k^{(2)}(t,x, \eta) \right| \leqslant \frac{1}{2\pi} \left( 3 \, \left\| U \right\|_{L^{\infty}(p_1+\eta, p_1 + \eta + k)} +  \left\| U' \right\|_{L^1(p_1+\eta, p_1 + \eta + k)} \right) \tilde{m}_{1, \eta, \varepsilon}^{-1} \, t^{-1} \; .
	\end{equation*}
	But we have
	\begin{equation*}
		\forall \, p \in [p_1 + \eta, p_1 + \eta + k] \qquad \left| U(p) \right| \leqslant \eta^{\mu-1} \left\| \tilde{u} \right\|_{L^{\infty}(\R)} \; ,
	\end{equation*}
	and
	\begin{equation*}
		\int_{p_1 + \eta}^{p_1 + \eta + k} \left| U'(p) \right| dp \leqslant \eta^{\mu-1} \left( \| \tilde{u} \|_{L^{\infty}(\R)} + \| \tilde{u}' \|_{L^1(\R)} \right) \; .
	\end{equation*}
	Consequently, $\tilde{I}_2^{(k)}(t,x,\eta)$ can be estimated as follows,
	\begin{equation} \label{tildeI2}
		\left| \tilde{I}_k^{(2)}(t,x, \eta) \right| \leqslant \frac{1}{2\pi} \, \eta^{\mu-1} \left( 4 \, \left\| \tilde{u} \right\|_{L^{\infty}(\R)} +  \left\| \tilde{u}' \right\|_{L^1(\R)} \right) \tilde{m}_{1, \eta, \varepsilon}^{-1} \, t^{-1} \; .			
	\end{equation}
	Using the dominated convergence Theorem which claims that
	\begin{equation*}
		\lim_{k \rightarrow + \infty} \tilde{I}_k^{(2)}(t,x,\eta) = \int_{p_1 + \eta}^{+\infty} U(p) \, e^{i t \psi(p)} dp \; ,
	\end{equation*}
	we can take the limit in \eqref{tildeI2} providing
	\begin{equation*}
		\left| \int_{p_1 + \eta}^{+ \infty} U(p) \, e^{i t \psi(p)} dp \right| \leqslant c_{1,\eta,\varepsilon}^{(2)}(u_0, f) \, t^{-1} \; ,
	\end{equation*}
	with
	\begin{equation*}
		c_{1,\eta,\varepsilon}^{(2)}(u_0, f) := \frac{1}{2\pi} \, \eta^{\mu-1} \left( 4 \, \left\| \tilde{u} \right\|_{L^{\infty}(\R)} +  \left\| \tilde{u}' \right\|_{L^1(\R)} \right) \tilde{m}_{1, \eta, \varepsilon}^{-1} \; .
	\end{equation*}
	Similar arguments permit to furnish
	\begin{equation*}
		\forall \, (t,x) \in \mathfrak{C}\big( f'(p_1-\varepsilon), f'(p_1 + \varepsilon) \big) \qquad \left| \int_{- \infty}^{p_1 - \eta} U(p) \, e^{i t \psi(p)} dp \right| \leqslant c_{2,\eta,\varepsilon}^{(2)}(u_0, f) \, t^{-1} \; ,
	\end{equation*}
	with
	\begin{equation*}
		c_{2,\eta,\varepsilon}^{(2)}(u_0, f) := \frac{1}{2\pi} \, \eta^{\mu-1} \left( 4 \, \left\| \tilde{u} \right\|_{L^{\infty}(\R)} +  \left\| \tilde{u}' \right\|_{L^1(\R)} \right) \tilde{m}_{2, \eta, \varepsilon}^{-1} \; ,
	\end{equation*}
	with $\displaystyle \tilde{m}_{2, \eta, \varepsilon}^{-1} := f'(p_1 - \varepsilon) - f'(p_1 - \eta) > 0$.\\
	Finally, by setting
	\begin{equation} \label{c_2}
		c_{\eta,\varepsilon}^{(2)}(u_0, f) := c_{1,\eta,\varepsilon}^{(2)}(u_0, f) + c_{2,\eta,\varepsilon}^{(2)}(u_0, f) \; ,
	\end{equation}
	we obtain for all $(t,x) \in \mathfrak{C}\big( f'(p_1-\varepsilon), f'(p_1 + \varepsilon) \big)$,
	\begin{equation*}
		\Big| I^{(2)}(t,x,\eta) \Big| \leqslant \left| \int_{-\infty}^{p_1-\eta} \dots \right| + \left| \int_{p_1 + \eta}^{+\infty} \dots \right| \leqslant c_{\eta,\varepsilon}^{(2)}(u_0, f) \, t^{-1} \; .
	\end{equation*}
	This ends the proof.
\end{proof}

\vspace{0.4cm}

On the other hand, in cones without the critical direction, the stationary point and the singularity are sufficiently far and so they do not interact with each other. Consequently, theirs effects on the decay rate are not coupled as in the previous result. Hence these two particular points provide two distinct decay rates: $t^{-\frac{1}{2}}$ coming from the stationary point and $t^{-\mu}$ coming from the singularity. We note that these two rates are better than $t^{-\frac{\mu}{2}}$.\\
This result combined with Theorem \ref{PREGEO1} highlights the fact that the singular frequency has a stronger influence on the decay rate in narrow regions containing the direction $p_1$, in accordance with the more precise results in \cite{article2}, which have been obtained under stronger conditions.

\begin{PREGEO2} \label{PREGEO2}
	Suppose that $u_0$ satisfies Condition \emph{(C$_{p_1, \mu}$)} and choose two finite real numbers $\tilde{p}_1 < \tilde{p}_2$ such that $p_1 \notin [\tilde{p}_1, \tilde{p}_2]$. Then for all $\displaystyle (t,x) \in \mathfrak{C}\big( f'(\tilde{p}_1), f'(\tilde{p}_2) \big)$, we have
		\begin{equation*}
			\big| u(t,x) \big| \leqslant c_{\tilde{p}_1, \tilde{p}_2}^{(1)}(u_0,f) \, t^{-\frac{1}{2}} + c_{\tilde{p}_1, \tilde{p}_2}^{(2)}(u_0,f) \, t^{-\mu} + c_{\tilde{p}_1, \tilde{p}_2}^{(3)}(u_0,f) \, t^{-1} \; .
		\end{equation*}
		The constants $c_{\tilde{p}_1, \tilde{p}_2}^{(1)}(u_0,f)$, $c_{\tilde{p}_1, \tilde{p}_2}^{(2)}(u_0,f)$ and $c_{\tilde{p}_1, \tilde{p}_2}^{(3)}(u_0,f)$ are given by \eqref{c_21}, \eqref{c_22} and \eqref{c_23} respectively.
\end{PREGEO2}

\begin{proof}
	The employed arguments in this proof are similar to those of the preceding one, so we only furnish a sketch of the proof. \\
	Let $\eta \in \big(0, \min \{ |\tilde{p}_1 - p_1|, |p_1 - \tilde{p}_2| \} \big)$ and split the integral again,
	\begin{align*}
		u(t,x) = \int_{\R} U(p) \, e^{it \psi(p)} \, dp	& = \int_{\tilde{p}_1 - \eta}^{\tilde{p}_2 + \eta} \dots \; + \int_{\R \backslash [\tilde{p}_1 - \eta, \tilde{p}_2 + \eta]} \dots \\[3mm]
												& =: I^{(1)}(t,x,\eta) + I^{(2)}(t,x, \eta) \; ,
	\end{align*}
	On the interval $[\tilde{p}_1- \eta, \tilde{p}_2 + \eta]$, the phase has a unique stationary point and the amplitude is regular. Theorem \ref{VDC} is applicable with $\rho = 2$ and $\mu = 1$, and we get
	\begin{equation*}
		\forall \, (t,x) \in \mathfrak{C}\big( f'(\tilde{p}_1), f'(\tilde{p}_2) \big) \qquad \Big| I^{(1)}(t,x,\eta) \Big| \leqslant c_{\tilde{p}_1, \tilde{p}_2}^{(1)}(u_0,f) \, t^{-\frac{1}{2}} \; ,
	\end{equation*}
	where
	\begin{equation} \label{c_21}
		c_{\tilde{p}_1, \tilde{p}_2}^{(1)}(u_0,f) := \left\{ \begin{array}{rl}
				& \displaystyle \frac{(\tilde{p}_1 - \eta - p_1)^{\mu-1}}{\pi} \, \bigg( \left\| \tilde{u} \right\|_{L^{\infty}(\R)} \\
				& \displaystyle \qquad + \; \Big( 4  \left\| \tilde{u} \right\|_{L^{\infty}(\R)} + \left\| \tilde{u}' \right\|_{L^1(\R)} \Big) m_{1, \tilde{p}_1, \tilde{p}_2}^{-1} \bigg) \; , \quad \text{if} \; p_1 < \tilde{p}_1 \; , \\
				& \vspace{-0.3cm} \\
				& \displaystyle \frac{(p_1 - \tilde{p}_2 - \eta)^{\mu-1}}{\pi} \, \bigg( \left\| \tilde{u} \right\|_{L^{\infty}(\R)} \\
				& \displaystyle \qquad + \; \Big( 4  \left\| \tilde{u} \right\|_{L^{\infty}(\R)} + \left\| \tilde{u}' \right\|_{L^1(\R)} \Big) m_{1, \tilde{p}_1, \tilde{p}_2}^{-1} \bigg) \; , \quad \text{if} \; p_1 > \tilde{p}_2 \; ,
		\end{array} \right.
	\end{equation}
	with $\displaystyle m_{1, \tilde{p}_1, \tilde{p}_2} := \min_{p \in [\tilde{p}_1 - \eta, \tilde{p}_2 + \eta]} f''(p) > 0$.\\
	Now let us study $I^{(2)}(t,x,\eta)$. First of all, we remark that we integrate over two infinite branches such that one of them contains the singularity $p_1$. Consequently we shall suppose that $p_1 < \tilde{p}_1$ without loss of generality; the other case $p_1 > \tilde{p}_2$ can be treated in a similar way. We consider the following sequence,
	\begin{equation*}
		\forall \, k \in \N^* \qquad \tilde{I}_k^{(2)}(t,x,\eta) := \int_{p_1-k}^{\tilde{p}_1-\eta} U(p) \, e^{it \psi(p)} \, dp \; .
	\end{equation*}
	We note that $[p_1 - k, \tilde{p}_1 - \eta]$ contains the singularity $p_1$ and $\psi'$ does not vanish on this interval. Hence Theorem \ref{VDC4} can be employed on $[p_1 - k, p_1]$ and on $[p_1, \tilde{p}_1 - \eta]$, and taking the limit with the dominated convergence Theorem, we obtain
	\begin{equation*}
		\forall \, (t,x) \in \mathfrak{C}\big( f'(\tilde{p}_1), f'(\tilde{p}_2) \big) \qquad \left| \int_{-\infty}^{\tilde{p}_1-\eta} U(p) \, e^{it \psi(p)} \, dp \right| \leqslant c_{\tilde{p}_1, \tilde{p}_2}^{(2)}(u_0,f) \, t^{-\mu} \; ,
	\end{equation*}
	where
	\begin{equation*}
		c_{\tilde{p}_1, \tilde{p}_2}^{(2)}(u_0,f) := \frac{1}{\pi} \left( \frac{1}{\mu} \left\| \tilde{u} \right\|_{L^{\infty}(\R)} + \Big( 4 \left\| \tilde{u} \right\|_{L^{\infty}(\R)} + \left\| \tilde{u}' \right\|_{L^1(\R)} \Big) \tilde{m}_{2, \tilde{p}_1}^{-1} \right) \; ,
	\end{equation*}	
	with $\displaystyle \tilde{m}_{2, \tilde{p}_1} := f'(\tilde{p}_1) - f'(\tilde{p}_1 - \eta) > 0$. On the other infinite branch, we define $\tilde{I}_k^{(3)}(t,x,\eta)$ as follows,
	\begin{equation*}
		\forall \, k \in \N^* \qquad \tilde{I}_k^{(3)}(t,x,\eta) := \int_{\tilde{p}_2+\eta}^{\tilde{p}_2+\eta+k} U(p) \, e^{it \psi(p)} \, dp \; .
	\end{equation*}
	Here there is no singularity or stationary point, therefore Theorem \ref{VDC4} in the case $\mu = 1$ is applicable and furnishes
	\begin{equation*}
		\forall \, (t,x) \in \mathfrak{C}\big( f'(\tilde{p}_1), f'(\tilde{p}_2) \big) \qquad \left| \int_{\tilde{p}_2+\eta}^{+\infty} U(p) \, e^{it \psi(p)} \, dp \right| \leqslant c_{\tilde{p}_1, \tilde{p}_2}^{(3)}(u_0,f) \, t^{-1} \; ,
	\end{equation*}
	where
	\begin{equation*}
		c_{\tilde{p}_1, \tilde{p}_2}^{(3)}(u_0,f) := \frac{(\tilde{p}_2 + \eta - p_1)^{\mu-1}}{2 \pi} \left( 4 \left\| \tilde{u} \right\|_{L^{\infty}(\R)} + \left\| \tilde{u}' \right\|_{L^1(\R)} \right) \tilde{m}_{3, \tilde{p}_2}^{-1} \; ,
	\end{equation*}
	with $\displaystyle \tilde{m}_{3, \tilde{p}_2} := f'(\tilde{p}_2 + \eta) - f'(\tilde{p}_2) > 0$.\\
	Consequently we have an estimate of $I^{(2)}(t,x,\eta)$ in the case $p_1 < \tilde{p}_1$,
	\begin{equation*}
		\forall \, (t,x) \in \mathfrak{C}\big( f'(\tilde{p}_1), f'(\tilde{p}_2) \big) \qquad \Big| I^{(2)}(t,x,\eta) \Big| \leqslant c_{\tilde{p}_1, \tilde{p}_2}^{(2)}(u_0,f) \, t^{-\mu} + c_{\tilde{p}_1, \tilde{p}_2}^{(3)}(u_0,f) \, t^{-1} \; .
	\end{equation*}
	To conclude, we provide the values of the constants $c_{\tilde{p}_1, \tilde{p}_2}^{(2)}(u_0,f)$ and $c_{\tilde{p}_1, \tilde{p}_2}^{(3)}(u_0,f)$ depending on the position of $p_1$:
	\begin{equation} \label{c_22}
		\bullet \quad c_{\tilde{p}_1, \tilde{p}_2}^{(2)}(u_0,f) := \left\{ \begin{array}{rl}
				& \displaystyle \frac{1}{\pi} \left( \frac{1}{\mu} \left\| \tilde{u} \right\|_{L^{\infty}(\R)} + \Big( 4 \left\| \tilde{u} \right\|_{L^{\infty}(\R)} + \left\| \tilde{u}' \right\|_{L^1(\R)} \Big) \tilde{m}_{2, \tilde{p}_1}^{-1} \right) \; , \quad \text{if} \; p_1 < \tilde{p}_1 \; , \\
				& \vspace{-0.3cm} \\
				& \displaystyle \frac{1}{\pi} \left( \frac{1}{\mu} \left\| \tilde{u} \right\|_{L^{\infty}(\R)} + \Big( 4 \left\| \tilde{u} \right\|_{L^{\infty}(\R)} + \left\| \tilde{u}' \right\|_{L^1(\R)} \Big) \tilde{m}_{3, \tilde{p}_2}^{-1} \right) \; , \quad \text{if} \; p_1 > \tilde{p}_2 \; ,
		\end{array} \right.
	\end{equation}
	\begin{equation} \label{c_23}
		\bullet \qquad c_{\tilde{p}_1, \tilde{p}_2}^{(3)}(u_0,f) := \left\{ \begin{array}{rl}
				& \displaystyle \frac{(\tilde{p}_2 + \eta - p_1)^{\mu-1}}{2 \pi} \left( 4 \left\| \tilde{u} \right\|_{L^{\infty}(\R)} + \left\| \tilde{u}' \right\|_{L^1(\R)} \right) \tilde{m}_{3, \tilde{p}_2}^{-1} \; , \quad \text{if} \; p_1 < \tilde{p}_1 \; , \\
				& \vspace{-0.3cm} \\
				& \displaystyle \frac{(p_1 - \tilde{p}_1 + \eta)^{\mu-1}}{2 \pi} \left( 4 \left\| \tilde{u} \right\|_{L^{\infty}(\R)} + \left\| \tilde{u}' \right\|_{L^1(\R)} \right) \tilde{m}_{2, \tilde{p}_1}^{-1} \; , \quad \text{if} \; p_1 > \tilde{p}_2 \; .
		\end{array} \right.
	\end{equation}
\end{proof}

\vspace{0.4cm}

It could be interesting to estimate the $L^{\infty}$-norm of the solution with an initial data satisfying Condition (C$_{p_1, \mu}$). Unfortunately, the estimates of the two previous theorems are not necessarily uniform. For example, the number $m_{1,\tilde{p}_1, \tilde{p}_2}$ defined above may blow up when $\tilde{p}_1$ or $\tilde{p}_2$ tends to infinity if $\displaystyle \lim_{p \rightarrow \infty} f''(p) = 0$. Therefore, in the general case, it does not seem possible to derive a global estimate in a direct way. To establish such an estimate, we employ another method which consists in splitting the frequency line in small frequency bands and studying the contribution of each band. To do so, we employ the uniform estimate given by Corollary \ref{SCHRO2}. Then we add up all these estimates to obtain the final result.\\
Nevertheless, we need some extra growth conditions on the phase and decay assumptions on the amplitude: under our hypotheses, the second derivative of the symbol is allowed to vanish at infinity; so the order of the stationary point of the phase may change at infinity. To prevent a possible influence coming from this change of the nature of the phase, we choose an initial data having a Fourier transform which tends sufficiently fast to $0$ at infinity. Hence we have to consider new assumptions on the initial data which are a little more restrictive as compared with Condition (C$_{p_1, \mu}$). \\

\noindent \textbf{Condition ($\mathbf{C_{\mu, \alpha,r}}$).} Fix $\mu \in (0,1]$, $\alpha \geqslant 0$ and $r \geqslant 0$.\\
A tempered distribution $u_0$ satisfies Condition (C$_{\mu, \alpha,r}$) if and only if there exists a bounded differentiable function $\tilde{u} : \R \longrightarrow \C$ such that $\tilde{u}(0) \neq 0$ if $\mu \neq 1$, with
\begin{equation*}
	\forall \, p \in \R \backslash \{ 0 \} \qquad \tf u_0(p) = |p|^{\mu-1} \tilde{u}(p) \; .
\end{equation*}
Moreover we suppose that
\begin{equation*}
	\exists \,  M \geqslant 0 \quad \forall \, p \in \R \qquad \big| \tilde{u}(p) \big| \leqslant M \left( 1 + p^2 \right)^{-\frac{\alpha}{2}} \; ,
\end{equation*}
and that $\displaystyle \tilde{u}' \in L_{loc}^1(\R)$ with
\begin{equation*}
	\exists \, M' \geqslant 0 \quad \forall \, n \in \left\{ n \in \mathbb{Z} \, \big| \, |n| \geqslant r \right\} \qquad \left\| \tilde{u}' \right\|_{L^1(n,n+1)} \leqslant M' \, |n|^{-\alpha} \; .
\end{equation*}	

\begin{REM4}
	\em
	\begin{enumerate}
		\item Here we put the singular frequency at $0$ only for simplicity.
		\item If we suppose $\alpha > \mu$ then $\tf u_0 \in L^1(\R)$. Indeed $\displaystyle \tf u_0 \in L_{loc}^1(\R)$ since $\mu \in (0,1]$ and $\displaystyle \tilde{u} \in L^{\infty}(\R)$. Furthermore we have
	\begin{equation*}
		\forall \, p \in \R \backslash \{0 \} \qquad \big| \tf u_0(p) \big| \leqslant M \left( 1 + p^2 \right)^{-\frac{\alpha}{2}} |p|^{\mu-1} \leqslant M \, |p|^{\mu-1-\alpha} \; ,
	\end{equation*}
	since $\big( 1 + p^2 \big)^{\frac{1}{2}} \geqslant |p|$. Hence the hypothesis $\alpha > \mu$ implies the integrability of $\tf u_0$ on $\R$.\\
	The fact that $\tf u_0 \in L^1(\R)$ implies the existence of $u_0$ in $\mathcal{S}'(\R)$ and the solution formula \eqref{formula} is still well-defined for all $t > 0$ and $x \in \R$. 
	\item Let us give an example of the above condition. Choose $u_0 \in \mathcal{S}'(\R)$ such that its Fourier transform has the following form:
	\begin{equation*}
		\forall \, p \in \R \backslash \{ 0 \} \qquad \tf u_0 (p) = |p|^{\mu-1} \, (1+p^2)^{-\frac{\alpha}{2}} \; ,
	\end{equation*}
	with $\mu \in (0,1]$ and $\alpha > \mu$. Here $\tilde{u} : \R \longrightarrow \R$ is defined by $\displaystyle \tilde{u}(p) = (1+p^2)^{-\frac{\alpha}{2}}$ for all $p \in \R$. \\
	Here we only have to control $\left\| \tilde{u}' \right\|_{L^1(n,n+1)}$ since the other hypotheses are clearly satisfied. One can quickly show that
	\begin{equation*}
		\left\| \tilde{u}' \right\|_{L^1(n,n+1)} = \left\{ \begin{array}{rl}
				& \displaystyle \tilde{u}(n) - \tilde{u}(n+1) \leqslant \tilde{u}(n) \; , \quad \text{if} \; n \geqslant 0 \; , \\
				& \vspace{-0.3cm} \\
				& \displaystyle \tilde{u}(n+1) - \tilde{u}(n) \leqslant \tilde{u}(n+1) \; , \quad \text{if} \; n \leqslant -1 \; .
		\end{array} \right.
	\end{equation*}
	Using the fact that $|n+1|^{-\alpha} \leqslant 2^{\alpha} |n|^{-\alpha}$, if $n \leqslant -2$ according to Lemma \ref{LEM2}, we obtain
	\begin{equation*}
		\left\| \tilde{u}' \right\|_{L^1(n,n+1)} \leqslant \left\{ \begin{array}{rl}
				& \displaystyle \left( 1 + n^2 \right)^{-\frac{\alpha}{2}} \leqslant n^{-\alpha} \; , \quad \text{if} \; n \geqslant 0 \; , \\
				& \vspace{-0.3cm} \\
				& \displaystyle \left( 1 + (n+1)^2 \right)^{-\frac{\alpha}{2}} \leqslant |n+1|^{-\alpha} \leqslant 2^{\alpha} |n|^{-\alpha} \; , \quad \text{if} \; n \leqslant -2 \; .
		\end{array} \right.
	\end{equation*}
	Hence for all $|n| \geqslant 2$, we have
	\begin{equation*}
		\left\| \tilde{u}' \right\|_{L^1(n,n+1)} \leqslant 2^{\alpha} |n|^{-\alpha} \; ,
	\end{equation*}
	and consequently, $u_0$ satisfies Condition (C$_{\mu, \alpha,2}$).
	\end{enumerate}
\end{REM4}

\begin{SCHRO3} \label{SCHRO3}
	We suppose that
	\begin{equation*}
		\exists \, R > 0 \quad \forall \, |p| \geqslant R \qquad c \, |p|^{-\beta} \leqslant f''(p) \; ,
	\end{equation*}
	for certain $\beta, c > 0$. Moreover we assume that $u_0$ satisfies Condition \emph{(C$_{\mu,\alpha,r}$)}, with $\mu \in (0,1]$, $\alpha > \mu + \beta$ and $r \leqslant R$. Then
	\begin{equation*}
		\forall \, t \geqslant 1 \qquad \big\| u(t,.) \big\|_{L^{\infty}(\R)} \leqslant c^{(1)}(u_0,f) \, t^{- \frac{\mu}{2}} + c^{(2)}(u_0,f) \, t^{- \frac{1}{2}} \; ,
	\end{equation*}
	where the constants $c^{(1)}(u_0,f)$ and $c^{(2)}(u_0,f)$ are given by \eqref{C_1} and \eqref{C_(2)}, respectively.
\end{SCHRO3}


\begin{proof}
	We recall that the solution of the initial value problem can be written as follows,
	\begin{equation*}
		\forall \, (t,x) \in (0,+\infty) \times \R \qquad u(t,x) = \int_{\R} U(p) \, e^{i t \psi(p)} \, dp \; ,
	\end{equation*}
	where
	\begin{equation*}
		\left\{ \begin{array}{rl}
				& \displaystyle \forall \, p \in \R \backslash \{0 \} \qquad U(p) := \frac{1}{2\pi}  \, \tf u_0(p) = \frac{1}{2\pi}  \, |p|^{\mu-1} \tilde{u}(p) \; , \\
				& \vspace{-0.3cm} \\
				& \displaystyle \forall \, p \in \R \qquad \psi(p) := \frac{x}{t} \, p - f(p) \; .
		\end{array} \right.
	\end{equation*}
	Let us define
		\begin{equation*}
			N := \lceil R \rceil + 1 \qquad , \qquad \mathfrak{S}_N = \mathbb{Z} \backslash \{-N,\dots, N-1 \} \; ,
		\end{equation*}
		where $\lceil . \rceil$ is the ceiling function. Now we split the integral,
		\begin{align}
			\int_{\R} U(p) e^{it \psi(p)} \, dp	& = \int_{\R} \chi_{[-N,N)}(p) U(p) \, e^{it \psi(p)} \, dp + \int_{\R} \sum_{n \in \mathfrak{S}_N} \chi_{[n,n+1)}(p) U(p) \, e^{it \psi(p)} \, dp \nonumber \\
												& \label{split} = \int_{-N}^{N} U(p) \, e^{it \psi(p)} \, dp + \sum_{n \in \mathfrak{S}_N} \int_n^{n+1} U(p) \, e^{it \psi(p)} \, dp \; ,
		\end{align}
		where $\chi_{[n,n+1)}$ is the characteristic function of the interval $[n,n+1)$. We can apply Corollary \ref{SCHRO2} on $[-N,0]$ (resp. on $[0,N]$), with $\tilde{I}=[-N-1,1]$ (resp. $\tilde{I} = [-1,N+1]$), and we obtain
		\begin{align}
			\forall \, (t,x) \in \{t \geqslant 1 \} \times \R \qquad \left| \int_{-N}^N U(p) \, e^{i t \psi(p)} \, dp \right|	& \leqslant \left| \int_{-N}^{0} \dots \right| + \left| \int_0^N \dots \right| \nonumber \\
						& \leqslant \big(c_{-N}(u_0,f) + c_{+N}(u_0,f) \big) \, t^{-\frac{\mu}{2}} \nonumber \\
						& \label{C_1} =: c^{(1)}(u_0,f) \, t^{-\frac{\mu}{2}} \; ,
		\end{align}
		with
		\begin{align*}
			& \bullet \quad c_{-N} (u_0,f) := \frac{1}{2 \pi} \, \frac{5}{\mu} \left\| \tilde{u} \right\|_{L^{\infty}(-N,0)} + \frac{1}{2 \pi} \, \Big( 4 \left\| \tilde{u} \right\|_{L^{\infty}(-N,0)} + \left\| \tilde{u}' \right\|_{L^1(-N,0)} \Big) \, m_{-N} \; , \\
			& \bullet \quad c_{+N} (u_0,f) := \frac{1}{2 \pi} \, \frac{5}{\mu} \left\| \tilde{u} \right\|_{L^{\infty}(0,N)} + \frac{1}{2 \pi} \, \Big( 4 \left\| \tilde{u} \right\|_{L^{\infty}(0,N)} + \left\| \tilde{u}' \right\|_{L^1(0,N)} \Big) \, m_{+N} \; ,
		\end{align*}
		and
		\begin{equation*}
			m_{\pm N} := 2 \left( \min_{p \in [0, \pm N]} f''(p) \right)^{-1} + \Big( \pm f' \big( \pm (N+1) \big) \mp f'(\pm N) \Big)^{-1} + \Big(\pm f'(0) \mp f'(\mp1)\Big)^{-1} > 0 \; .
		\end{equation*}
		Now let us study each term of the series. By hypothesis, $U$ is regular on $[n,n+1]$ for $n \in \mathfrak{S}_N$. Corollary \ref{SCHRO2} is then applicable with $\mu = 1$, $\tilde{I}=[n-1,n+2]$, and it furnishes
		\begin{equation*}
			\left| \int_{n}^{n+1} U(p) \, e^{it \psi(p)} \, dp \right| \leqslant c_n(u_0,f) \, t^{-\frac{1}{2}} \; ;
		\end{equation*}
		the constant $c_n(u_0,f) > 0$ is given by
		\begin{equation*}
			c_n(u_0,f)	:= \frac{5}{2 \pi} \, \left\| U \right\|_{L^{\infty}(n,n+1)} + \frac{1}{2 \pi} \, \Big( 4 \left\| U \right\|_{L^{\infty}(n,n+1)} + \left\| U' \right\|_{L^1(n,n+1)} \Big) \, m_{n} \; ,
		\end{equation*}
		with $\displaystyle m_{n} := 2 \left( \min_{p \in [n,n+1]} f''(p) \right)^{-1} + \big( f'(n+2)-f'(n+1) \big)^{-1} + \big( f'(n) - f'(n-1) \big)^{-1} > 0$.\\ On the one hand, we have by using the hypothesis on $u_0$ and Lemma \ref{LEM2},
		\begin{equation*}
			\left\| U \right\|_{L^{\infty}(n,n+1)} \leqslant 2^{1-\mu} |n|^{\mu-1} \, M 2^{\alpha} |n|^{-\alpha} = 2^{1-\mu + \alpha} M \, |n|^{\mu-1-\alpha} \; .
		\end{equation*}
		Moreover
		\begin{align}
			\left\| U' \right\|_{L^1(n,n+1)}	& \leqslant \int_n^{n+1} (1-\mu) |p|^{\mu-2} \left| \tilde{u} (p) \right| dp + \int_n^{n+1} |p|^{\mu-1} \left| \tilde{u}' (p) \right| dp \nonumber \\
												& \leqslant \left\| \tilde{u} \right\|_{L^{\infty}(n,n+1)} \int_n^{n+1} (1-\mu) |p|^{\mu-2} dp + 2^{1-\mu} \, |n|^{\mu-1} \left\| \tilde{u}' \right\|_{L^1(n,n+1)} \nonumber \\
												& \label{estderivu} \leqslant M 2^{\alpha} |n|^{-\alpha} \, 2^{1-\mu} |n|^{\mu-1} + 2^{1-\mu} |n|^{\mu-1} \, M' |n|^{-\alpha} \\
												& \leqslant 2^{1-\mu} \left( 2^{\alpha} M + M' \right) |n|^{\mu-1-\alpha} \nonumber \; ,
		\end{align}
		where the additional hypothesis on $\tilde{u}'$ was used to get \eqref{estderivu}. On the other hand,
		\begin{equation*}
			\forall \, x , y \in \R \qquad f'(x) - f'(y) = \int_y^x f''(p) \, dp \geqslant |x-y| \min_{p \in [x,y]} f''(p) \; ,
		\end{equation*}
		and by the hypothesis on $f$, we have
		\begin{equation*}
			f''(p) \geqslant c \, |p|^{-\beta} \geqslant \left\{	\begin{array}{rl}
				& \displaystyle c \, 2^{-\beta} |n|^{-\beta} \; , \quad \text{if} \; p \in [n,n+1] \; , \\ [2mm]
				& \displaystyle c \, 2^{-\beta} |n+1|^{-\beta} \geqslant c \, 4^{-\beta} |n|^{-\beta} \; , \quad \text{if} \; p \in [n+1,n+2] \; , \\ [2mm]
				& \displaystyle c \, 2^{-\beta} |n-1|^{-\beta} \geqslant c \, 4^{-\beta} |n|^{-\beta} \; , \quad \text{if} \; p \in [n-1,n] \; ;
		\end{array} \right.
		\end{equation*}
		the last inequalities were obtained by employing Lemma \ref{LEM2}. This provides
		\begin{equation*}
			f'(n+2) - f'(n+1) \geqslant 4^{-\beta} c \, |n|^{-\beta} \qquad , \qquad f'(n) - f'(n-1) \geqslant 4^{-\beta} c \, |n|^{-\beta} \; .
		\end{equation*}
		It follows
		\begin{equation*}
			m_n \leqslant 2^{\beta} c^{-1} \, |n|^{\beta} + 4^{\beta} c^{-1} \, |n|^{\beta} + 4^{\beta} c^{-1} \, |n|^{\beta} \leqslant 3 \times 2^{2\beta} c^{-1} \, |n|^{\beta} \; .
		\end{equation*}
		Then we obtain
		\begin{align*}
			c_n(u_0,f)	& = \frac{5}{2 \pi} \, \left\| U \right\|_{L^{\infty}(n,n+1)} + \frac{1}{2 \pi} \, \Big( 4 \left\| U \right\|_{L^{\infty}(n,n+1)} + \left\| U' \right\|_{L^1(n,n+1)} \Big) \, m_{n} \\
						& \leqslant 5 \, \frac{2^{\alpha-\mu} M}{\pi} \, |n|^{\mu-1-\alpha} + 3 \, \frac{2^{2-\mu+\alpha+2\beta} M}{\pi \, c} \, |n|^{\mu-1-\alpha+\beta} \\
						& \qquad \qquad + 3 \, \frac{2^{-\mu+2\beta}  \left( 2^{\alpha} M + M' \right)}{\pi \, c} \, |n|^{\mu-1-\alpha+\beta} \; .
		\end{align*}
		Since $\alpha > \mu + \beta$, the sequence $\big\{ c_n(u_0,f) \big\}_{n \in \mathfrak{S}_N}$ is sommable. It follows
		\begin{equation*}
			\left| \sum_{n \in \mathfrak{S}_N} \int_n^{n+1} U(p) \, e^{it \psi(p)} \, dp \right|	\leqslant \sum_{n \in \mathfrak{S}_N} \left| \int_n^{n+1} U(p) \, e^{it \psi(p)} \, dp \right| \leqslant \left( \sum_{n \in \mathfrak{S}_N} c_n(u_0,f) \right) t^{-\frac{1}{2}} \; .
		\end{equation*}
		Then we can control the above series by employing the following estimate of the Riemann Zeta function,
		\begin{equation*}
			\forall \, \sigma > 1 \qquad \sum_{n \in \N^*} n^{-\sigma} \leqslant \frac{\sigma}{\sigma - 1} \; .
		\end{equation*}
		Hence
		\begin{align}
			\sum_{n \in \mathfrak{S}_N} c_n(u_0,f)	& \leqslant 5 \, \frac{2^{\alpha-\mu + 1} M}{\pi} \, \frac{\alpha + 1 - \mu}{\alpha - \mu} \, + 3 \, \frac{2^{3-\mu+\alpha+2\beta} M}{\pi \, c} \, \frac{\alpha + 1 - \mu - \beta}{\alpha - \mu - \beta} \nonumber \\
													& \qquad + 3 \, \frac{2^{1-\mu+2\beta}  \left( 2^{\alpha} M + M' \right)}{\pi \, c} \, \frac{\alpha + 1 - \mu - \beta}{\alpha - \mu - \beta} \nonumber \\
													& = 5 \, \frac{2^{\alpha-\mu + 1} M}{\pi} \, \frac{\alpha + 1 - \mu}{\alpha - \mu} \, + 3 \, \frac{2^{1-\mu+2\beta} ( 5 \times 2^{\alpha} M + M' )}{\pi \, c} \, \frac{\alpha + 1 - \mu - \beta}{\alpha - \mu - \beta} \nonumber \\
													& =: c^{(2)}(u_0,f) \; . \label{C_(2)}
		\end{align}
		Finally we obtain for all $t \geqslant 1$,
		\begin{equation*}
			\big\| u(t,.) \big\|_{L^{\infty}(\R)} \leqslant c^{(1)}(u_0,f) \, t^{-\frac{\mu}{2}} + c^{(2)}(u_0,f) \, t^{-\frac{1}{2}} \; .
		\end{equation*}
\end{proof}

\begin{REM7}
	\em Note that in the case $f'' \geqslant m > 0$ on $\R$, the $L^{\infty}$-norm estimate can be obtained easier: we consider the sequence of integrals $(I_n)_{n \geqslant 1}$ defined by
	\begin{equation*}
		\forall \, (t,x) \in \R_+^* \times \R \qquad I_n(t,x) = \int_{-n}^{n} \tf u_0(p) \, e^{-i t f(p) + ixp} dp \; ;
	\end{equation*}
	then we apply Corollary \ref{COR1} to $I_n(t,x)$, providing a uniform estimate which does not depend on $n$. Finally we take the limit employing the dominated convergence Theorem to conclude. In this case, we can relax the hypotheses on the initial data by supposing that $u_0$ satisfies Condition (C$_{p_1,\mu}$) with $p_1 = 0$.
\end{REM7}

\subsection*{An intrinsic concentration phenomenon caused by a limited growth of the symbol}

\hspace{0.5cm} In this last subsection, we exhibit the influence  of a growth limitation at infinity of the symbols on the dispersion of the solution.\\


We suppose that the second derivative of the symbol has a sufficient decay at infinity, implying the fact that the first derivative $f'$ is bounded on $\R$. An important consequence of this boundedness is the belonging of the stationary point to a bounded interval related to $f'(\R)$, leading to a space-time cone. Hence the influence of the stationary point on the decay is restricted to this cone.\\
Here we only provide an estimate outside the above mentioned cone: indeed, the preceding results, especially Theorem \ref{SCHRO3}, affirm that the $L^{\infty}$-norm of the solution is estimated by $t^{-\frac{\mu}{2}}$, covering the inside of the cone. In the following result, we refine this estimate outside the cone, furnishing the better decay $t^{-\mu}$.\\
To prove it, we employ the method of the proof of Theorem \ref{SCHRO3}: we assume that the initial data satisfy the above Condition (C$_{\mu, \alpha,r}$), then we decompose the frequency line and we study the influence of each frequency band.

\begin{SCHRO4} \label{SCHRO4}
	We suppose that
	\begin{equation*}
		\exists \, R > 0 \qquad \forall  \, |p| \geqslant R \qquad c_- \, |p|^{-\beta_-} \leqslant f''(p) \leqslant c_+ \, |p|^{-\beta_+} \; ,
	\end{equation*}
	for certain $\beta_- \geqslant \beta_+ > 1$ and $c_+ , c_- > 0$. Moreover we assume that $u_0$ satisfies Condition \emph{(C$_{\mu,\alpha,r}$)}, with $\mu \in (0,1]$, $\alpha > \mu + \beta_- - 1$ and $r \leqslant R$.	Then
	\begin{equation*}
		\forall \, (t,x) \in \mathfrak{C}(a,b)^c \qquad \big| u(t,x) \big| \leqslant c_1^c(u_0,f) \, t^{- \mu} + c_2^c(u_0,f) \, t^{-1} \; ,
	\end{equation*}
	where the constants $c_c^{(1)}(u_0,f)$ and $c_2^c(u_0,f)$ are given by \eqref{C_(1)} and \eqref{C_2}, respectively. The two finite real numbers $a < b$ verify
	\begin{equation*}
		\lim_{p \rightarrow - \infty} f'(p) = a \qquad , \qquad \lim_{p \rightarrow + \infty} f'(p) = b \; .
	\end{equation*}	
\end{SCHRO4}

\begin{REM6} \label{REM6}
	\em \begin{enumerate}
		\item Let us show that the above hypothesis on $f$ implies that $f'(\R) = (a,b)$, where the bounds are given by the limits of $f'$ at infinity.\\ Due to the fact that $f'' > 0$, $f'$ is strictly increasing and so if $f'$ is bounded, then it reaches its bounds at $\pm \infty$. On the compact interval $[-r,r]$, $f'$ is bounded since it is continuous. Now for $p \geqslant r$, we have
		\begin{equation*}
			f'(p) - f'(r) = \int_r^p f''(x) \, dx \leqslant c_+ \int_r^p x^{-\beta_+} dx \; ;
		\end{equation*}
		it follows
		\begin{equation*}
		f'(p) \leqslant \frac{c_+}{1-\beta_+} \, p^{1-\beta_+} + f'(r) - \frac{c_+}{1 - \beta_+} \, r^{1-\beta_+} \leqslant f'(r) - \frac{c_+}{1 - \beta_+} \, r^{1-\beta_+} < \infty \; .
		\end{equation*}
		Consequently $f'$ is bounded from above. Similar arguments show that $f'$ is bounded from below.
		\item Moreover we can control the distance between $f'$ and its bounds when $|p| \geqslant r$ :
		\begin{align*}
			& \bullet \quad \forall \, p \geqslant r \qquad b - f'(p) = \int_p^{+\infty} f''(x) \, dx \geqslant c_- \int_p^{+\infty} x^{-\beta_-} dx = - \frac{c_-}{1 - \beta_-} \, p^{1-\beta_-} \; , \\
			& \bullet \quad \forall \, p \leqslant -r \qquad f'(p) - a = \int_{-\infty}^p f''(x) \, dx \geqslant c_- \int_{-\infty}^p (-x)^{-\beta_-} dx = - \frac{c_-}{1 - \beta_-} \, (-p)^{1-\beta_-} \; .
		\end{align*}
	\end{enumerate}
\end{REM6}

\begin{proof}[Proof of Theorem \ref{SCHRO4}]
	We consider the solution formula given in the previous proofs and its rewriting once again. First let us split the integral as in \eqref{split}, that is to say,
	\begin{equation*}
		\int_{\R} U(p) e^{it \psi(p)} \, dp	 = \int_{-N}^{N} U(p) \, e^{it \psi(p)} \, dp + \sum_{n \in \mathfrak{S}_N} \int_n^{n+1} U(p) \, e^{it \psi(p)} \, dp \; ,
	\end{equation*}
	with
	\begin{equation*}
		N := \lceil R \rceil + 1 \qquad , \qquad \mathfrak{S}_N = \mathbb{Z} \backslash \{-N,\dots, N-1 \} \; .
	\end{equation*}
	We study the first integral. We note that the assumption $(t,x) \in \mathfrak{C}(a,b)^c$ implies that $\psi'$ does not vanish on $\R$. More precisely, on $[-N,N]$, we have
	\begin{align*}
		& \bullet \quad \forall \, p \in [-N,0] \quad \left| \psi'(p) \right| = \left| \frac{x}{t} - f'(p) \right| \geqslant \min \big\{ f'(-N) - a , b - f'(0) \big\} := m_{-N} > 0 \; , \\
		& \bullet \quad \forall \, p \in [0,N] \quad \left| \psi'(p) \right| = \left| \frac{x}{t} - f'(p) \right| \geqslant \min \big\{ f'(0) - a , b - f'(N) \big\} := m_{+N} > 0 \; ,
	\end{align*}
	and $\psi'$ is still monotone on $\R$ since $\psi'' = - f'' < 0$. Applying Theorem \ref{VDC4} on $[-N,0]$ and on $[0,N]$, we obtain
	\begin{align}
		\forall \, (t,x) \in \mathfrak{C}(a,b)^c \qquad \left| \int_{-N}^N U(p) \, e^{i t \psi(p)} \, dp \right|	& \leqslant \left| \int_{-N}^{0} \dots \right| + \left| \int_0^N \dots \right| \nonumber \\
					& \leqslant \big(c_{-N}^c(u_0,f) + c_{+N}^c(u_0,f) \big) \, t^{-\mu} \nonumber \\
						& =: c_1^c(u_0,f) \, t^{-\mu} \; , \label{C_(1)}
	\end{align}
	with
	\begin{align*}
		& \bullet \quad c_{-N}^c(u_0,f) := \frac{1}{2 \pi} \, \frac{1}{\mu} \left\| \tilde{u} \right\|_{L^{\infty}(-N,0)} + \frac{1}{2\pi} \, \Big( 4 \left\| \tilde{u} \right\|_{L^{\infty}(-N,0)} + \left\| \tilde{u}' \right\|_{L^1(-N,0)} \Big) \, m_{-N}^{-1} \; , \\
		& \bullet \quad c_{+N}^c(u_0,f) := \frac{1}{2 \pi} \, \frac{1}{\mu} \left\| \tilde{u} \right\|_{L^{\infty}(0,N)} + \frac{1}{2\pi} \, \Big( 4 \left\| \tilde{u} \right\|_{L^{\infty}(0,N)} + \left\| \tilde{u}' \right\|_{L^1(0,N)} \Big) \, m_{+N}^{-1} \; .
	\end{align*}
	Now we study the terms of the series. By hypothesis, $U$ is regular on $[n,n+1]$ for $n \in \mathfrak{S}_N$, $\psi'$ is monotone and is non-vanishing. Theorem \ref{VDC4} is then applicable once again on the interval $[n,n+1]$ with $\mu = 1$ and it furnishes
	\begin{equation*}
		\forall \, (t,x) \in \mathfrak{C}(a,b)^c \qquad \left| \int_{n}^{n+1} U(p) \, e^{it \psi(p)} \, dp \right| \leqslant c_n^c(u_0,f) \, t^{-1} \; ;
	\end{equation*}
	the constant $c_n^c(u_0,f) > 0$ is defined by
	\begin{equation*}
		c_n^c(u_0,f) := \frac{1}{2 \pi} \Big( 3 \left\| U \right\|_{L^{\infty}(n,n+1)} + \left\| U' \right\|_{L^1(n,n+1)} \Big) \, m_n^{-1} \; ,
	\end{equation*}
	with $\displaystyle m_{n} := \min \big\{ f'(n) - a , b - f'(n+1) \big\} > 0$. As in the previous proof, we can show by using the hypothesis on $u_0$ and Lemma \ref{LEM2} that
	\begin{equation*}
		\left\| U \right\|_{L^{\infty}(n,n+1)} \leqslant 2^{1-\mu + \alpha} M \, |n|^{\mu-1-\alpha} \; ,
	\end{equation*}
	and
	\begin{equation*}
		\left\| U' \right\|_{L^1(n,n+1)} \leqslant 2^{1-\mu} \left( 2^{\alpha} M + M' \right) |n|^{\mu-1-\alpha} \nonumber \; .
	\end{equation*}
	Furthermore the point ii) of Remarks \ref{REM6} implies
	\begin{equation*}
		m_n \geqslant \frac{c_-}{\beta_- - 1} \, \min \left\{ |n|^{1-\beta_-} , |n+1|^{1-\beta_-} \right\} \geqslant \frac{c_-}{\beta_- - 1} \, 2^{1-\beta_-} |n|^{1-\beta_-} \; ,
	\end{equation*}
	where we used Lemma \ref{LEM2} one more time. Then we obtain
	\begin{align*}
		c_n^c(u_0,f)	& = \frac{1}{2 \pi} \Big( 3 \left\| U \right\|_{L^{\infty}(n,n+1)} + \left\| U' \right\|_{L^1(n,n+1)} \Big) \, m_n^{-1} \\
					& \leqslant \frac{\beta_- - 1}{2 \pi c_-} \Big( 3 \times 2^{-\mu+\alpha+\beta_-} \, M \, + 2^{-\mu + \beta_-} \left(2^{\alpha} M + M' \right) \Big) \, |n|^{\mu-2-\alpha+\beta_-} \; .
	\end{align*}
	The summability of the sequence $\big\{ c_n^c(u_0,f) \big\}_{n \in \mathfrak{S}_N}$ comes from the assumption $\alpha > \mu + \beta_- - 1$, and we have
	\begin{align}
		\sum_{n \in \mathfrak{S}_N} c_n^c(u_0,f) 	& \leqslant \frac{\beta_- - 1}{ \pi c_-} \Big( 3 \times 2^{-\mu+\alpha+\beta_-} \, M \, + 2^{-\mu + \beta_-} (2^{\alpha} M + M') \Big) \, \frac{\alpha + 2 - \mu - \beta_-}{\alpha + 1 - \mu - \beta_-} \nonumber \\
													& =: c_2^c(u_0,f) \; . \label{C_2}
	\end{align}
	It follows
	\begin{equation*}
		\left| \sum_{n \in \mathfrak{S}_N} \int_n^{n+1} U(p) \, e^{it \psi(p)} \, dp \right| \leqslant \left( \sum_{n \in \mathfrak{S}_N} c_n^c(u_0,f) \right) t^{-1} \leqslant c_2^c(u_0,f) \, t^{-1} \; .
	\end{equation*}
	We obtain finally for all $(t,x) \in \mathfrak{C}(a,b)^c$,
	\begin{equation*}
		\big| u(t,x) \big| \leqslant c_1^c(u_0,f) \, t^{-\mu} + c_2^c(u_0,f) \, t^{-1} \; .
	\end{equation*}
\end{proof}

\section{Technical lemmas}

\hspace{0.5cm} In the last section, we state and prove two basic lemmas which are used several times in this paper.

\begin{LEM} \label{LEM}
	Let $\alpha \in (0,1]$ and let $x,y \in \R_+$ such that $x \geqslant y$. Then we have
	\begin{equation*}
		x^{\alpha} - y^{\alpha} \leqslant (x-y)^{\alpha} \; .
	\end{equation*}
\end{LEM}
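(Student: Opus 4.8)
The plan is to derive the statement from the more symmetric superadditivity inequality $a^{\alpha}+b^{\alpha}\geqslant (a+b)^{\alpha}$, valid for all $a,b\geqslant 0$ and $\alpha\in(0,1]$, applied with $a:=x-y$ and $b:=y$: then $(x-y)^{\alpha}+y^{\alpha}\geqslant \big((x-y)+y\big)^{\alpha}=x^{\alpha}$, which is exactly the claim after moving $y^{\alpha}$ to the other side.

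To prove the auxiliary inequality, first I would dispose of the trivial case $a+b=0$, where both sides vanish (this also covers $x=0$, which forces $y=0$, in the original statement). Assuming $a+b>0$, I divide through by $(a+b)^{\alpha}$ and set $s:=a/(a+b)$ and $t:=b/(a+b)$, so that $s,t\in[0,1]$ and $s+t=1$; the inequality becomes $s^{\alpha}+t^{\alpha}\geqslant 1$. The key elementary step is that for $u\in[0,1]$ and $\alpha\in(0,1]$ one has $u^{\alpha}\geqslant u$: this is clear for $u=0$, and for $u>0$ it follows from $u^{\alpha}=u\cdot u^{\alpha-1}$ together with $u^{\alpha-1}\geqslant 1$, which holds because $\alpha-1\leqslant 0$ and $0<u\leqslant 1$. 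Applying this to $s$ and to $t$ and adding gives $s^{\alpha}+t^{\alpha}\geqslant s+t=1$, as required.

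I do not expect any genuine obstacle here; the only points needing care are the boundary behaviours — the case $x=y$ (where the right-hand side is $0^{\alpha}=0$ and the left-hand side is also $0$), the case $x=0$, and the degenerate exponent $\alpha=1$ (where the inequality becomes an equality) — all handled uniformly by the argument above. An alternative one-line route, which I could use instead, is to fix $y\geqslant 0$ and consider $g(x):=(x-y)^{\alpha}-x^{\alpha}+y^{\alpha}$ on $[y,+\infty)$: then $g(y)=0$ and $g'(x)=\alpha\big((x-y)^{\alpha-1}-x^{\alpha-1}\big)\geqslant 0$ since $s\mapsto s^{\alpha-1}$ is non-increasing on $(0,+\infty)$, hence $g\geqslant 0$; but the scaling argument avoids the mild fuss of $g'$ blowing up at $x=y$.
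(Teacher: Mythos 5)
Your proof is correct, but it takes a genuinely different route from the paper's. The paper fixes $y>0$, divides by $y^{\alpha}$ to reduce to the one-variable inequality $(z-1)^{\alpha}-z^{\alpha}+1\geqslant 0$ for $z\geqslant 1$, and proves it by a monotonicity argument: the function $h(z):=(z-1)^{\alpha}-z^{\alpha}+1$ satisfies $h(1)=0$ and $h'(z)=\alpha\bigl((z-1)^{\alpha-1}-z^{\alpha-1}\bigr)\geqslant 0$ for $z>1$ because $s\mapsto s^{\alpha-1}$ is non-increasing. That is precisely the ``alternative one-line route'' you mention at the end (up to the normalization by $y$), including the mild issue that the derivative blows up as $z\to 1^{+}$, which the paper handles implicitly by only evaluating $h'$ on $(1,+\infty)$. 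Your main argument instead normalizes by $a+b=x$ and reduces everything to the pointwise inequality $u^{\alpha}\geqslant u$ on $[0,1]$, which you verify by writing $u^{\alpha}=u\cdot u^{\alpha-1}$ with $u^{\alpha-1}\geqslant 1$. This buys you a calculus-free proof in which the boundary cases ($x=y$, $y=0$, $\alpha=1$) are absorbed uniformly rather than split off, and it yields the slightly more symmetric superadditivity statement $a^{\alpha}+b^{\alpha}\geqslant(a+b)^{\alpha}$ as a by-product; the paper's derivative argument is marginally shorter to write down but needs the separate disposal of $y=0$ and a small amount of care at the endpoint of differentiation. Both proofs are complete and correct.
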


\begin{proof}
	The case $\alpha = 1$ is trivial so let us assume $\alpha < 1$. If $y=0$ then the result is clear. Suppose $y \neq 0$, then the above inequality is equivalent to
	\begin{equation*}
		\left( \frac{x}{y} \right)^{\alpha} - 1 \leqslant \left( \frac{x}{y} - 1 \right)^{\alpha} \; .
	\end{equation*}
	Define the function $h : [1, +\infty) \longrightarrow \R$ by
	\begin{equation*}
		\forall \, z \in [1,+\infty) \qquad h(z) := (z-1)^{\alpha} - z^{\alpha} + 1 \; .
	\end{equation*}
	Then we note that for all $z > 1$,
	\begin{equation*}
		h'(z) = \alpha \left( (z-1)^{\alpha -1} - z^{\alpha-1} \right) \geqslant 0 \; ,
	\end{equation*}
	since $\alpha - 1 < 0$. It follows
	\begin{equation*}
		\forall \, z \in [1, +\infty) \qquad h(z) \geqslant h(1) = 0 \; ,
	\end{equation*}
	which proves the lemma.
\end{proof}

\begin{LEM2} \label{LEM2}
	Let $p \in [n,n+1]$, where $n \geqslant 1$ or $n \leqslant -2$. Then we have
	\begin{equation*}
		\frac{1}{2} \, |n| \leqslant |p| \leqslant 2 |n| \; .
	\end{equation*}
\end{LEM2}

\begin{proof}
	Firstly let us suppose that $n \geqslant 1$. Then
	\begin{equation*}
		\frac{1}{2} \, n \leqslant n \leqslant p \leqslant n + 1 \leqslant 2 \, n \; .
	\end{equation*}
	Now we suppose $n \leqslant -2$. Similar computations provide
	\begin{equation*}
		\frac{1}{2} \, |n| = - \frac{1}{2} \, n \leqslant -(n+1) \leqslant -p = |p| \leqslant -n = |n| \leqslant 2 \, |n| \; .
	\end{equation*}
\end{proof}

\end{document}